\newtheorem{prop}{Proposition}[section]
\newtheorem{thm}[prop]{Theorem}
\newtheorem{lemma}[prop]{Lemma}
\newtheorem{cor}[prop]{Corollary}
\theoremstyle{definition}
\newtheorem{defn}[prop]{Definition}
\newcommand{\spin}{\mathfrak{s}}
    \def\HMt{%
       \setbox0=\hbox{$\widehat{\mathit{HM}}$}
       \setbox1=\hbox{$\mathit{HM}$}
       \dimen0=1.1\ht0
       \advance\dimen0 by 1.17\ht1
       \smash{\mskip2mu\raise\dimen0\rlap{%
          \begin{turn}{180}
              {$\widehat{\phantom{\mathit{HM}}}$}
           \end{turn}} \mskip-2mu    
                \mathit{HM}
    }{\vphantom{\widehat{\mathit{HM}}}}{}}
    \def\HSt{%
       \setbox0=\hbox{$\widehat{\mathit{HS}}$}
       \setbox1=\hbox{$\mathit{HS}$}
       \dimen0=1.1\ht0
       \advance\dimen0 by 1.17\ht1
       \smash{\mskip2mu\raise\dimen0\rlap{%
          \begin{turn}{180}
              {$\widehat{\phantom{\mathit{HS}}}$}
           \end{turn}} \mskip-2mu    
                \mathit{HS}
    }{\vphantom{\widehat{\mathit{HS}}}}{}}
    \newcommand{\HMb}{\overline{\mathit{HM}}}
        \newcommand{\HMr}{{\mathit{HM}}}
\newcommand{\HMf}{\widehat{\mathit{HM}}}
\theoremstyle{remark}
\newtheorem{remark}{Remark}[section]
\newtheorem{ass}{Assumption}[section]
\begin{document}
\title{Topology of the Dirac equation on spectrally large three-manifolds}

\author{Francesco Lin}
\address{Department of Mathematics, Columbia University} 
\email{flin@math.columbia.edu}

\begin{abstract}

The interaction between spin geometry and positive scalar curvature has been extensively explored. In this paper, we instead focus on Dirac operators on Riemannian three-manifolds for which the spectral gap $\lambda_1^*$ of the Hodge Laplacian on coexact $1$-forms is large compared to the curvature. As a concrete application, we show that for any spectrally large metric on the three-torus $T^3$, the locus in the torus of flat $U(1)$-connections where (a small generic pertubation of) the corresponding twisted Dirac operator has kernel is diffeomorphic to a two-sphere.
\par
While the result only involves linear operators, its proof relies on the non-linear analysis of the Seiberg-Witten equations. It follows from a more general understanding of transversality in the context of the monopole Floer homology of a torsion spin$^c$ three-manifold $(Y,\spin)$ with a large spectral gap $\lambda_1^*$. When $b_1>0$, this gives rise to a very rich setup and we discuss a framework to describe explicitly in certain situations the Floer homology groups of $(Y,\spin)$ in terms of the topology of the family of Dirac operators parametrized by the torus of flat $U(1)$-connections on $Y$.
\end{abstract}

\maketitle

\section*{Introduction}

The interplay between the geometry of Dirac operators and scalar curvature has been intensively explored since the foundational work of Lichnerowicz and Atiyah-Singer (see for example \cite{Roe} for an introduction). A cornerstone in the theory is Gromov and Lawson's proof that the torus $T^n$ does not admit metrics of positive scalar curvature \cite{GL}. In particular, their approach involves studying the family of Dirac operators $\{D_B\}$ obtained from the spin Dirac operator by twisting by a flat $U(1)$-connection.
\\
\par
In three-dimensions, the topology of closed orientable manifolds admitting a metric of positive scalar curvature is in fact \textit{very} restricted: any such manifold is a connected sum of spherical space forms and $S^1\times S^2$s (\cite{Per}, see also \cite{MT}). Because of this, it is natural to ask whether there are interactions between Dirac operators and other natural geometric notions.
\par
In this paper, we will explore the relation between Dirac operators and the spectral geometry on three-manifolds. More specifically, we will consider for a closed Riemannian three-manifold $(Y,g)$ the first eigenvalue $\lambda_1^*>0$ of the Hodge Laplacian $\Delta=(d+d^*)^2$ acting on coexact $1$-forms $d^*\Omega^2$, and consider the situation in which this spectral gap is large in the following sense.
\begin{defn}\label{speclarge}
The Riemannian $3$-manifold $(Y,g)$ is \textit{spectrally large} if the inequality
\begin{equation}
\lambda_1^*>-\frac{1}{2}\mathrm{inf}_{p\in Y}\tilde{s}(p)
\end{equation}
holds, were we denote by $\tilde{s}(p)$ the sum of the two least eigenvalues of the Ricci curvature at $p$.
\end{defn}
In particular, for a metric satisfying $\mathrm{Ricci}\geq -k$ for some $k\geq 0$ everywhere, if $\lambda_1^*>k$ then $(Y,g)$ is spectrally large. As a basic example, any flat metric on the three-torus $T^3$ is spectrally large; furthermore, the condition is open in the $C^2$-topology in the space of metrics hence there are plenty of non-flat metrics on $T^3$ which are spectrally large. Of course, determining whether a given metric on $T^3$ is spectrally large and understanding the basic properties of the space of such metrics are both very challenging questions.
\par
With this in mind, we will begin by focusing on applications of our techniques to the study of the Dirac equation on a general spectrally large $(T^3,g)$. Denote the spinor bundle (which is a rank $2$ hermitian bundle) by $S\rightarrow T^3$. For a fixed section $A\in i\Gamma(\mathfrak{u}(S))$ of the bundle of (complex-linear) self-adjoint endomorphism of $S$ (which we assume for simplicity to be smooth), we will consider the family of perturbed Dirac equations
\begin{equation}\label{perturbeddirac}
D_B\Psi =A\Psi
\end{equation}
parametrized by the torus $\mathbb{T}_{T^3}$ of spin$^c$ connections on $S$ (up to gauge) for which the induced $U(1)$-connection $B^t$ on $\mathrm{det}(S)$ is flat. We can identify the latter space with
\begin{equation*}
\mathbb{T}_{T^3}=\frac{H^1(T^3;i\mathbb{R})}{H^1(T^3;2\pi i\mathbb{Z})},
\end{equation*}
which is a three-dimensional torus itself. We then have the following.
\begin{thm}\label{main}
Let $g$ be a spectrally large metric on $T^3$. For a generic $C^0$-small $A\in \Gamma( i\mathfrak{u}(S))$, the locus $\mathsf{K}$ consisting of $[B]\in\mathbb{T}_{T^3}$ for which the perturbed Dirac equation (\ref{perturbeddirac}) has non-zero solutions is diffeomorphic to a two-sphere.
\end{thm}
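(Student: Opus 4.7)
The plan is to combine a linear transversality argument cutting out $\mathsf{K}$ as a smooth surface with the Seiberg--Witten / monopole Floer framework promised by the abstract, which then pins down its diffeomorphism type.

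First I would show that for a generic $C^0$-small perturbation $A$ the map $[B]\mapsto D_B-A$ from $\mathbb{T}_{T^3}$ into the space of complex-linear self-adjoint Fredholm operators is transverse to the stratification by kernel dimension; the strata of complex kernel dimension $\geq 2$ have real codimension four and so are avoided by a generic three-parameter family. The primary stratum (one-dimensional complex kernel) then defines $\mathsf{K}$ as a smooth closed codimension-one submanifold of $\mathbb{T}_{T^3}\cong T^3$, hence a closed surface. Since the normal bundle of $\mathsf{K}$ is trivialized by the sign of the small real eigenvalue of $D_B-A$ that crosses zero, $\mathsf{K}$ is in addition orientable. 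What remains is to determine its genus and number of components.

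For this I would pass to Seiberg--Witten theory on the torsion spin$^c$ three-manifold $(T^3,\spin_0)$. Spectral largeness ought to imply, via a Weitzenb\"ock-type estimate, that every critical point of the (suitably perturbed) Chern--Simons--Dirac functional on the blown-up configuration space is reducible. These reducibles are precisely the pairs $([B],[\Psi])$ with $[B]\in\mathbb{T}_{T^3}$ and $[\Psi]$ a unit eigenvector of $D_B-A$; those with zero eigenvalue project exactly onto $\mathsf{K}$. Running the Morse--Bott monopole Floer machinery then produces a spectral sequence whose contribution from the zero-eigenvalue stratum is built out of the cohomology of an $S^1$-bundle over $\mathsf{K}$, while the nonzero eigenvalues contribute standard copies of $H^*(T^3)$. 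Matching this against the known monopole Floer homology of $(T^3,\spin_0)$, which has the shape of the exterior algebra on $H^1(T^3)$ tensored with the appropriate $U$-module, pins down $H_*(\mathsf{K};\ztwo)$ to coincide with $H_*(S^2;\ztwo)$.

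A closed orientable surface with the $\ztwo$-homology of $S^2$ is diffeomorphic to $S^2$, which completes the proof. The hardest step is the third: isolating the contribution of $\mathsf{K}$ inside the Morse--Bott spectral sequence and showing that no cancellations occur which would leave room for $H_1(\mathsf{K})$ to be nonzero. This is where the full transversality framework for spectrally large three-manifolds alluded to in the abstract is really needed, since the remaining arguments reduce to essentially linear-algebraic bookkeeping.
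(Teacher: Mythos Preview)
Your outline has the right overall shape (transversality to make $\mathsf{K}$ a smooth orientable surface, then Seiberg--Witten/Floer theory to constrain it), but the crucial middle step is misdescribed and, as stated, would not go through.

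The central difficulty is that $\mathbb{T}_{T^3}$ is \emph{not} a Morse--Bott critical submanifold for the Chern--Simons--Dirac functional: the normal Hessian degenerates precisely along $\mathsf{K}$. So there is no ``Morse--Bott monopole Floer machinery'' to invoke, and no spectral sequence whose terms are an $S^1$-bundle over $\mathsf{K}$ together with copies of $H^*(T^3)$. The paper's approach is rather different. One perturbs \emph{further} by $\varepsilon f(B)$ for a Morse function $f$ on $\mathbb{T}_{T^3}$ adapted to the filtration by spectral flow, obtaining isolated reducible critical points (towers over $\mathrm{Crit}(f)$). The chain complex then involves unknown irreducible trajectories via $\partial^u_s$, so one cannot simply read off $H_*(\mathsf{K})$; but a feature special to $b_1=3$ is that, after arranging all maxima and minima of $f$ to share a common value, the underlying Morse index becomes a genuine filtration on $\check{C}_*$.

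The resulting spectral sequence, compared against the known $\HMt_*(T^3,\spin_0)$, first forces $f$ to have exactly one maximum and one minimum with spectral flow $-1$ between them. This is the step your sketch does not anticipate: a priori the family could have type $k>1$, and nothing in a direct ``match $H_*(\mathsf{K})$ to the answer'' argument rules that out. Only once simplest type is established can one apply the structure theorem for that case to see $H_*(\mathbb{T}_-)\cong H_*(\mathrm{pt})$; then half-lives--half-dies on $\partial\mathbb{T}_-$ shows each component of $\mathsf{K}$ is a sphere, and $H_2(\mathbb{T}_-)=0$ forces a single component. Your final sentence correctly identifies where the work lies, but the mechanism you propose for doing it is not the one that succeeds.
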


Notice that some genericity assumptions are necessary. For example for a flat metric $g_{\mathrm{flat}}$ and $A\equiv 0$, the Lichnerowicz formula implies that a harmonic spinor is necessarily parallel, and one checks directly that there is exactly one point in $\mathbb{T}_{T^3}$ for which (\ref{perturbeddirac}) has solutions.
\par
In this setup, we can also verify explicitly that the statement of Theorem \ref{main} holds for $A$ constant multiple of the identity $\delta\neq 0$ small. This explicit computation is in fact the main inspiration behind our result, which extends its conclusion to the setup of spectrally large metrics where a direct analysis is not available.
\begin{remark}
For \textit{any} choice of metric $g$ and $A$ the exists $[B]\in\mathbb{T}_{T^3}$ such that (\ref{perturbeddirac}) has non-trivial solutions, so that $\mathsf{K}$ is always non-empty. This is a consequence of the Atiyah-Singer index theorem for families (see \cite{ASskew} and \cite{APS3}), and the non-triviality of the triple cup product on $T^3$. Of course, in general the locus $\mathsf{K}$ for which (\ref{perturbeddirac}) has solutions will be very complicated.
\end{remark}
Under some mild additional assumptions, we can say more about the location of the two-sphere $\mathsf{K}=S^2$ inside $\mathbb{T}_{T^3}$. First of all, because $\mathbb{T}_{T^3}$ is an irreducible three-manifold, Alexander's theorem implies that $\mathsf{K}$ separates $T^3$ in two components, one of which is diffeomorphic to the interior of an embedded ball $B^3\hookrightarrow T^3$ (see for example \cite{Rol}). In $\mathbb{T}_{T^3}$ there are eight special points corresponding to the spin structures $\mathsf{s}$ on $T^3$; once one is fixed, the other ones are obtained by twisting by a flat line bundle with holonomy $\pm1$ around each loop. Among these spin structures there is a special one $\mathsf{s}_0$, which can identified by the fact that it has Rokhlin invariant $\mu(T^3,\mathsf{s}_0)=1$, while the other ones have $\mu(T^3,\mathsf{s})=0$ (see \cite[Ch. 5]{Kir}). We then have the following.
\begin{thm}\label{mainspin}
In the setup the Theorem \ref{main}, assume that under the natural decomposition
\begin{equation*}
A=A_0+\tau\cdot I
\end{equation*}
with $A_0$ traceless and $\tau=\mathrm{tr}A/2$ a function, the $C^0$ norm of $A_0$ is small compared to that of $\tau$. Then the component of $\mathbb{T}_{T^3}\setminus \mathsf{K}$ which is diffeomorphic to $\mathrm{int} B^3$ contains the spin structure $\mathsf{s}_0$, while the other component contains all the remaining spin structures $\mathsf{s}\neq \mathsf{s}_0$.
\end{thm}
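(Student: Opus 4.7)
The statement refines Theorem~\ref{main} by locating the $2$-sphere $\mathsf{K}$ relative to the eight spin structures on $T^3$. My plan is to first reduce, via a deformation in $A$, to the case of a pure scalar perturbation $A = \bar\tau \cdot I$ with $\bar\tau$ a small constant, and then to analyze this model directly by a Weitzenbock argument that exploits the spectral largeness to localize $\mathsf{K}$ to a small neighborhood of $\mathsf{s}_0$.

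Consider the homotopy $A_u = (1-u)A + u\bar\tau\cdot I$ for $u\in[0,1]$, where $\bar\tau$ is a small constant close to the average of $\tau$, chosen generic so that $\bar\tau \notin \mathrm{spec}(D_\mathsf{s})$ for any spin structure $\mathsf{s}$. Under the hypothesis $\|A_0\|_{C^0} \ll \|\tau\|_{C^0}$, the traceless part $(A_u)_0 = (1-u)A_0$ is dominated by the trace part at each $u$, so (after small generic adjustments if needed) Theorem~\ref{main} applies throughout the homotopy and yields a continuously varying family of $2$-spheres $\mathsf{K}_u \subset \mathbb{T}_{T^3}$. The component of $\mathbb{T}_{T^3}\setminus\mathsf{K}_u$ containing each spin structure is locally constant in $u$ so long as no spin structure crosses $\mathsf{K}_u$, reducing the theorem to verifying this non-crossing condition together with the conclusion at $u = 1$.

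For the non-crossing, I would rule out nontrivial solutions of $D_\mathsf{s}\Psi = A_u\Psi$ by squaring and using the Weitzenbock identity $D_\mathsf{s}^2 = \nabla^*\nabla + s/4$ (valid because the induced $B^t$ is flat) to obtain
\[
\int |\nabla\Psi|^2 + \int (s/4)|\Psi|^2 \le C\|A_u\|_{C^0}^2\, \|\Psi\|^2;
\]
transferring this estimate to the associated $1$-form $q(\Psi)\in i\Omega^1(T^3)$ via the dimension-three isomorphism $S\otimes S^*\cong \mathbb{C}\oplus T^*T^3$ and invoking the spectral largeness $\lambda_1^* > -\tfrac{1}{2}\inf\tilde s$ then forces $\Psi\equiv 0$ for $\|A_u\|_{C^0}$ sufficiently small. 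At $u=1$, the equation $D_B\Psi = \bar\tau\Psi$ squares to $\nabla_B^*\nabla_B \Psi = (\bar\tau^2 - s/4)\Psi$; writing $B = \mathsf{s}_0 + i\alpha$ for $\alpha$ a real harmonic $1$-form, the spectral largeness provides a lower bound on the twisted Bochner Laplacian $\nabla_B^*\nabla_B$ proportional to $\|\alpha\|_{L^2}^2$ as $[B]$ moves away from $\mathsf{s}_0$, which exceeds $\bar\tau^2 - \tfrac{1}{4}\inf s$ once $\bar\tau$ is small enough. Consequently $\mathsf{K}_1$ is confined to a small neighborhood of $\mathsf{s}_0$, and, being a $2$-sphere by Theorem~\ref{main}, bounds a small ball around $\mathsf{s}_0$ containing no other spin structure.

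\emph{Main obstacle.} The technical crux is the quantitative localization of $\mathsf{K}_1$ near $\mathsf{s}_0$, together with the non-crossing estimate: both rely on converting the spectral largeness hypothesis on the coexact Hodge Laplacian into effective spectral bounds for the twisted Dirac and Bochner operators parametrized by $\mathbb{T}_{T^3}$. Once these estimates are in hand, the homotopy argument combines them with Theorem~\ref{main} to yield the conclusion for the original $(g, A)$.
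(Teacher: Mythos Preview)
Your approach has a genuine gap at the point you yourself flag as the ``technical crux.'' The spectral largeness hypothesis is a bound on the Hodge Laplacian acting on \emph{coexact $1$-forms}; it has no direct bearing on the twisted Bochner Laplacian $\nabla_B^*\nabla_B$ acting on \emph{spinors}. Your claimed lower bound ``proportional to $\|\alpha\|_{L^2}^2$ as $[B]$ moves away from $\mathsf{s}_0$'' is not a consequence of $\lambda_1^* > -\tfrac{1}{2}\inf\tilde s$: for a general spectrally large metric (as opposed to the flat one) the unperturbed family $\{D_B\}$ may well have kernel far from $\mathsf{s}_0$, and nothing in Definition~\ref{speclarge} prevents this. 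The same problem undermines your non-crossing estimate: the quadratic form $q(\Psi)$ you extract from a solution of the \emph{linear} equation $D_{\mathsf{s}}\Psi = A_u\Psi$ need not be coexact (in Proposition~\ref{noirred} coexactness comes from the \emph{second} Seiberg--Witten equation, which you are not using here), so $\lambda_1^*$ never enters.

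The paper's argument is entirely different and does not attempt any analytic localization. At a spin point $\mathsf{s}$ the Dirac operator $D_{\mathsf{s}}$ is quaternionic-linear, and so is the scalar perturbation $\tau\cdot I$; hence $D_{\mathsf{s}}-\tau$ has even-dimensional kernel and there is a well-defined mod~$2$ spectral flow between any two spin points. By a result of the author this mod~$2$ spectral flow equals the difference of Rokhlin invariants, so $\mathsf{s}_0$ and the remaining seven $\mathsf{s}\neq\mathsf{s}_0$ lie in different components of $\mathbb{T}_{T^3}\setminus\mathsf{K}$. The smallness of $A_0$ relative to $\tau$ is used only to ensure that adding $tA_0$ for $t\in[0,1]$ never creates kernel at a spin point, so the mod~$2$ spectral flow is unchanged. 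Which component is the ball is then read off from the proof of Theorem~\ref{main}. No Weitzenb\"ock estimate or deformation to a constant scalar is needed.
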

\begin{comment}
\begin{figure}[h]
  \centering
\def\svgwidth{\textwidth}
\input{torus.pdf_tex}
\caption{A schematic picture (one dimension down) of the statement of Theorem \ref{mainspin}. Here the boundary edges are identified as usual, the solid bullet represent the other spin structures $\mathsf{s}\neq\mathsf{s}_0$ on $T^3$, and the thin line represents the sphere $\mathsf{K}$.}
\label{torus}
\end{figure} 
\end{comment}
Notice that in the case of a flat metric $g_{\mathrm{flat}}$, the spin structure $\spin_0$ can be identified as the unique $B_0$ for which $D_{B_0}$ has non-trivial kernel, and the conclusion of the result can be again checked explicitly for $A=\delta\neq 0$ small.
\\
\par
While the statement and conclusion of Theorems \ref{main} and \ref{mainspin} only involve linear operators, their proof relies on the non-linear analysis of the Seiberg-Witten equations on $(T^3,g)$. More generally, we will consider a Riemannian three-manifold $(Y,g)$ equipped with a torsion spin$^c$ structure $\spin$ (i.e. the spinor bundle has torsion first Chern class $c_1$). For a pair $(B,\Psi)$ consisting of a general spin$^c$ connection $B$ on the spinor bundle $S$ (i.e., the twisting connection $B^t$ is non necessarily flat) and spinor $\Psi$, the unperturbed version of the equations are given by
\begin{align}\label{seibergwitten}
\begin{split}
D_B\Psi&=0\\
\frac{1}{2}\rho(\ast F_{B^t})+(\Psi\Psi^*)_0&=0.
\end{split}
\end{align}
Here $\rho$ denotes the Clifford multiplication, $F_{B^t}$ is the curvature of $B^t$, and $(\Psi\Psi^*)_0$ is the traceless part of the endomorphism $\Psi\Psi^*$ of $S$. These are the equations for the critical points of the Chern-Simons-Dirac functional
\begin{equation*}
\mathcal{L}(B,\Psi)=-\frac{1}{8}\int_{Y}(B^t-B_0^t)\wedge(F_{B^t}+F_{B^t_0})+\frac{1}{2}\int_{Y} \langle D_B\Psi,\Psi\rangle d\mathrm{vol},
\end{equation*}
where $B_0$ is a fixed reference connection (which we can assume for simplicity to have $F_{B_0^t}=0$).
In \cite{KM}, the authors introduce the monopole Floer homology group $\HMt_*(Y,\spin)$, a topological invariant (hence independent of the metric) obtained by suitably counting solutions to these equations in way formally analogous to the $S^1$-equivariant Morse homology of $\mathcal{L}$. 
\begin{remark}
To simplify the discussion, when discussing Floer homology we will use $\mathbb{Q}$ coefficients throughout the paper (and keep it implicit in our notation).
\end{remark}
The starting point of our analysis are the results from \cite{LinSpec} and \cite{LL} stating that when $(Y,g)$ is spectrally large, the set of solutions to (\ref{seibergwitten}) up to gauge consists exactly of configurations $(B,0)$ where $[B]\in\mathbb{T}_Y$. When $b_1(Y)=0$, the latter consists of a single point and one readily concludes (being careful with transversality) that $Y$ is an $L$-space (i.e. it has trivial reduced Floer homology for all spin$^c$ structures).
\par
The situation when $b_1>0$ is much richer and complicated because $\mathbb{T}_Y$ is now a critical submanifold of $\mathcal{L}$ hence in order to determine $\HMt_*(Y,\spin)$ one must always perturb the equations. Furthermore, $\mathbb{T}_Y$ is in general \textit{not} a Morse-Bott singularity, the Hessian of $\mathcal{L}$ being degenerate in the normal directions exactly where the Dirac operator $D_B$ has kernel. The key technical content of the paper will be then to provide a framework to perturb the equations that allows to achieve transversality while maintaining a detailed understanding of the solutions to the equations in order to obtain a somewhat concrete description of the chain complex computing $\HMt_*(Y,\spin)$. Notice that, unlike the case $b_1=0$, $\HMt_*(Y,\spin)$ will be highly dependent on the choice of torsion spin$^c$ structure.
\\
\par
A simple observation is that the family of equations (\ref{perturbeddirac}) for $[B]\in\mathbb{T}_Y$ naturally arises when considering perturbations of the Chern-Simons functional of the form
\begin{equation}\label{pertCSD}
\mathcal{L}(B,\Psi)-\frac{1}{2}\langle \Psi,A\Psi\rangle_{L^2},
\end{equation}
which we call of \textit{spinorial type}. As we will see, on a spectrally large manifold the corresponding perturbed Seiberg-Witten equations for small $A$ still have no irreducible solutions. Furthermore, for generic spinorial perturbations the locus $\mathsf{K}\subset\mathbb{T}_Y$ is a codimension one hypersurface which has singularities in codimension at least $4$ (corresponding to the locus where the kernel has multiplicity); in particular, when $b_1(Y)\leq 3$ it is smooth. To achieve transversality, we will then add additional perturbation arising from a suitable Morse function on $\mathbb{T}_Y$ adapted to $\mathsf{K}$.
\\
\par
It has to be noted that in general, even though one can describe the generators of the Floer chain complex explicitly in terms of the family of perturbed Dirac operators, the determination of the differential is not possible. Nevertheless, we will be able to provide an explicit computation of the invariants under the additional assumption that the spectral flow between any two points in $\mathbb{T}_Y\setminus \mathsf{K}$ is at most one as in the case of $(T^3,{g}_{\mathrm{flat}})$. We will say that a manifold with this property is \textit{of the simplest type} (see Definition \ref{simplestdef}). As a concrete example, we will also show that (for specific metrics modeled on $\mathbb{R}\times\mathbb{H}^2$) the manifolds $S^1\times \Sigma_h$ for $h=2,3$ are of the simplest type, and combining our methods with classical results in algebraic geometry we will recover the analogous computations obtained in Heegaard Floer homology by Ozsv\'ath-Szab\'o \cite{OS} and Jabuka-Mark \cite{JM} using surgery techniques. The famous curves of Bolza and Klein will play a central role in the process.
\par
With this in hand, the proof of the main result Theorem \ref{main} will follow by a detailed analysis of which submanifolds $\mathsf{K}\subset \mathbb{T}_{T^3}$ give rise to Floer chain complexes computing the correct homology group $\HMt_*(T^3,\spin_0)$ (where $\spin_0$ denotes the unique torsion spin$^c$ structure). In particular, the conclusion also holds for any torsion spin$^c$ three-manifold $(Y,\spin)$ with $b_1=3$ for which $\HMt_*(Y,\spin)\cong \HMt_*(T^3,\spin_0)$ (as absolutely graded $\mathbb{Q}[U]\otimes \Lambda^*(H_1/\mathrm{tors})$-modules, up to an overall grading shift). By contrast, the refined statement in Theorem \ref{mainspin} takes as input additional structure arising from spin topology of ${T}^3$.
\par
We conclude by remarking that the determination of $\HMt_*(T^3,\spin_0)$ has been on fundamental importance in Seiberg-Witten theory and its applications to four-dimensional topology \cite{MMS},\cite{Tau}.
\\
\par
\textit{Organization. }In Section \ref{review} we recollect some background material in monopole Floer homology and the geometry of Dirac operators necessary for our discussion. In Section \ref{chaincompl} we provide a description of the generators of the Floer chain complex of a spectrally large three-manifold in terms of Morse theory on the torus of flat connections. In Section \ref{simplestdef} we introduce a special class of three-manifolds, which we call of the simplest type, and provide a concrete computation of their Floer homology groups. In Section \ref{sigmag} we discuss in detail two interesting examples of manifolds of the simplest type, namely $S^1\times \Sigma_g$ for $g=2,3$. Finally, in Section \ref{T3} we prove the main results of the paper, Theorems \ref{main} and \ref{mainspin}. 
\\
\par
\textit{Acknowledgements. }Understanding the Floer homology of $S^1\times\Sigma$ from a geometric viewpoint was a key motivation behind the project, and the author would like to thank Tom Mrowka for suggesting to think about it about a decade ago. He also thanks the anonymous referee of his previous paper \cite{LinTheta} for some helpful comments. This work was partially supported by the Alfred P. Sloan Foundation and NSF grant DMS-2203498.

\vspace{0.3cm}

\section{Background material}\label{review}
We recall some essential ideas about monopole Floer homology; we refer the reader to \cite{KM} for the definitive reference, and \cite{LinLec} for a friendly introduction to the topic.
Fix a spin$^c$ $3$-manifold $(Y,\spin)$. Given a spin$^c$ connection $B$, the curvature $2$-form $F_{B^t}$ of the connection $B^t$ induced on the determinant line bundle is always closed. Throughout the paper, we will only consider torsion spin$^c$ structures; by Chern-Weil theory, this implies that $F_{B^t}$ is an \textit{exact} $2$-form. Because of this one sees that reducible solutions $(B,0)$ of the equations (\ref{seibergwitten}) up to gauge are in correspondence with the torus $\mathbb{T}_Y$ of spin$^c$ connections $B$ with $B^t$ flat, see \cite[Section 4.2]{KM}.
\par
In particular, when $b_1>0$, the unperturbed equations are \textit{never} transversely cut out. Furthermore, in order to define the monopole Floer homology groups one needs to work in the blown-up configurations space consisting of triples $(B,r,\psi)$ with $r\in\mathbb{R}^{\geq0}$ and $\|\psi\|_{L^2}=1$. In this setup, one needs to introduce suitable perturbations and transversality for the three-dimensional equations can be phrased in terms of the following criteria:
\begin{itemize}
\item the critical points of (the perturbation of ) $\mathrm{grad}\mathcal{L}$ in the blow-down are non-degenerate.
\item at each reducible critical point $(B,0)$ of the (perturbation of) $\mathrm{grad}\mathcal{L}$, the (perturbation of the) Dirac operator $D_B$ has simple spectrum and trivial kernel.
\end{itemize}
In \cite[Ch. 11]{KM} a large space of perturbations is introduced so that these condition can be achieved. Of course, even if one had a concrete understanding of the solutions to the unperturbed equations, this might not be the case anymore after appealing to the general results for achieving transversality. Our goal in this paper is to study a concrete instance in which one can both achieve transversality and mantain a concrete understanding of the space of solutions. For example, in the context of transversality for the flat torus \cite[Ch. 37]{KM} and mapping tori of cyclic coverings $\Sigma\rightarrow \mathbb{P}^1$ in \cite{LinTheta}, for $\delta,\varepsilon> 0$ perturbations of the Chern-Simons functional of the form
\begin{equation*}
\widetilde{\mathcal{L}}(B,\Psi)=\mathcal{L}(B,\Psi)-\frac{\delta}{2}\|\Psi\|^2+\varepsilon f(B)
\end{equation*}
were considered. Here for a function $f:\mathbb{T}_Y\rightarrow \mathbb{R}$ we define (abusing notation) $f(B)$ to be the evaluation at the harmonic part of $B-B_0$ (thought of as an element in $ H^1(Y,i\mathbb{R})$).
\par
In those situations, the analysis of tranversality begins by studying the perturbations for which $\varepsilon=0$. In this context, the first of the Seiberg-Witten equations (\ref{seibergwitten}) is replaced by the eigenvalue equation
\begin{equation}
D_B\Psi=\delta\Psi.
\end{equation}
In the level of generality we are interested in, we will need to introduce additional perturbations involving only spinors that generalize the ${\delta}\|\Psi\|^2/2$ term. In particular, we will consider for a section $A\in\Gamma( i\mathfrak{u}(S))$ the perturbed Chern-Simons-Dirac functional (\ref{pertCSD}), for which the corresponding perturbed Seiberg-Witten equations are
\begin{align}\label{pertseibergwitten}
\begin{split}
D_B\Psi&=A\Psi\\
\frac{1}{2}\rho(\ast F_{B^t})+(\Psi\Psi^*)_0&=0.
\end{split}
\end{align}
We will refer to these perturbations as \textit{of spinorial type}. Notice that they are not of cylinder type in the sense of \cite[Ch. 11]{KM}; nevertheless, it is readily checked that they define $k$-tame perturbations, hence can be used to perturbed the equations in the relevant functional setup. The main advantage of using them is that, while the first the Seiberg-Witten changes in an understandable way, the second equation is left unchanged. In particular, reducible solutions $(B,0)$ to the equations are still identified with the torus $\mathbb{T}_Y$.
\begin{remark}\label{alternative}
Under the decomposition $A=A_0+\tau\cdot I$ with $A_0$ traceless, we can rewrite (\ref{perturbeddirac}) as
\begin{equation*}
D_{\tilde{B}}\Psi=\tau\cdot \Psi
\end{equation*}
where $\tilde{B}=B-\rho^{-1}(A_0)$ in a new spin$^c$ connection. Hence we can interpret the family of equations (\ref{perturbeddirac}) for $[B]\in\mathbb{T}$ as a family of equations where $\tilde{B}$ varies in the torus of spin$^c$ connections with fixed curvature.
\end{remark}
Of course, in general the equations (\ref{pertseibergwitten}) will admit irreducible solutions $(B,\Psi)$ (i.e. $\Psi$ not identically zero), and it is in general quite challenging to understand explicitly whether they do. In \cite{LinSpec} and \cite{LL} it is proved that a suitable condition of spectral nature guarantees that there are no such solutions. While those results were focused towards the case $b_1=0$, we will now adapt the discussion to fit our needs.
\par
Recall that we denote by $\lambda_1^*$ the least eigenvalue of the Hodge Laplacian $\Delta=(d+d^*)^2$ on coexact $1$-forms, and by $\tilde{s}(p)$ the sum of the two least eigenvalues of the Ricci curvature at $p\in Y$, and we say that the metric $g$ is spectrally large if the inequality
\begin{equation*}
\lambda_1^*>-\frac{1}{2}\inf_{p\in Y} \tilde{s}(p)
\end{equation*}
holds (cf. Definition \ref{speclarge}).
\begin{prop}\label{noirred}
Suppose $(Y,g)$ is spectrally large. Then for a spinorial type perturbation $A\in \Gamma(i\mathfrak{u}(S))$ of sufficiently small $C^0$ norm, the $A$-perturbed Seiberg--Witten equations (\ref{pertseibergwitten}) do not admit irreducible solutions for all torsion spin$^c$ structures $\spin$ on $Y$.
\end{prop}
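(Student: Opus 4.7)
The strategy is to extend the integral estimates of \cite{LinSpec} and \cite{LL} (which handle the unperturbed equations) by tracking how the spinorial perturbation $A$ enters, and then showing that for $\|A\|_{C^0}$ sufficiently small the resulting inequality still forces $\Psi\equiv 0$.

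First, I would derive the fundamental integral identity. Squaring $D_B\Psi=A\Psi$, applying the Lichnerowicz formula
\begin{equation*}
D_B^2\Psi = \nabla_B^*\nabla_B\Psi + \tfrac{s}{4}\Psi + \tfrac{1}{2}\rho(F_{B^t})\Psi,
\end{equation*}
eliminating the curvature term via the second equation (which gives $\tfrac{1}{2}\rho(F_{B^t})\Psi = \tfrac{1}{2}|\Psi|^2\Psi$ up to the standard $3$-dimensional sign conventions), pairing against $\Psi$, and using the self-adjointness of $D_B$ so that $\int\langle D_B(A\Psi),\Psi\rangle = \int|A\Psi|^2$, I obtain
\begin{equation*}
\int_Y\bigl(|\nabla_B\Psi|^2 + \tfrac{s}{4}|\Psi|^2 + \tfrac{1}{2}|\Psi|^4\bigr)\,dV = \int_Y|A\Psi|^2\,dV.
\end{equation*}

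Second, I would bring the spectral gap in through the second Seiberg--Witten equation. Because $c_1(\spin)$ is torsion, $F_{B^t}$ is exact, and a direct calculation in three dimensions shows that $\ast F_{B^t}$ is then a \emph{coexact} $1$-form. Writing $\ast F_{B^t} = -2\sigma(\Psi)$, the definition of $\lambda_1^*$ yields
\begin{equation*}
\int_Y|d\sigma(\Psi)|^2\,dV \;\ge\; \lambda_1^*\int_Y|\sigma(\Psi)|^2\,dV,
\end{equation*}
where the right-hand side is a positive multiple of $\int|\Psi|^4$ and the left-hand side can be controlled pointwise by $|\nabla_B\Psi|^2|\Psi|^2$.

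Third, and this is the heart of the argument inherited from \cite{LinSpec,LL}, I would combine the two previous steps with a pointwise analysis of $\nabla_B\Psi$ in the Ricci eigenframe. At each point where $\Psi\ne 0$ the rank-one traceless Hermitian endomorphism $(\Psi\Psi^*)_0$ singles out a distinguished direction in the tangent space via $\sigma(\Psi)$, and the two-dimensional plane transverse to it is precisely what produces the sum $\tilde{s}(p)$ of the two smallest Ricci eigenvalues. Adding a suitable positive multiple of the spectral inequality to the Weitzenb\"ock identity and carrying out this pointwise algebra converts the scalar-curvature term $s/4$ into $\tfrac{1}{2}\tilde{s}(p) + \lambda_1^*$ (modulo a manifestly nonnegative gradient remainder), producing
\begin{equation*}
\int_Y\Bigl(\lambda_1^* + \tfrac{1}{2}\tilde{s}(p)\Bigr)|\Psi|^4\,dV \;\le\; C\,\|A\|_{C^0}^{2}\int_Y|\Psi|^2\,dV
\end{equation*}
for a universal constant $C$. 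The spectrally large hypothesis guarantees a strict pointwise lower bound $\lambda_1^* + \tfrac{1}{2}\tilde{s}(p)\ge c_0>0$; a H\"older estimate $\int|\Psi|^2\le\mathrm{vol}(Y)^{1/2}(\int|\Psi|^4)^{1/2}$ then upgrades the inequality to $\int|\Psi|^4 \le C'\|A\|_{C^0}^{4}$, which for $\|A\|_{C^0}$ sufficiently small forces $\Psi\equiv 0$, contradicting irreducibility. Uniformity over torsion $\spin$ is automatic because only norms of $\Psi$ and of $F_{B^t}$ appear in the estimates.

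The principal obstacle is the third step: the pointwise Ricci bookkeeping is delicate and constitutes the technical core of \cite{LinSpec,LL}. The novelty here is checking that a \emph{general} spinorial perturbation $A\in\Gamma(i\mathfrak{u}(S))$ (as opposed to a scalar multiple of the identity) does not introduce sign-ambiguous contributions spoiling the mechanism. Fortunately the commutator $[D_B,A]$ never appears in the pairing $\int\langle D_B^2\Psi,\Psi\rangle$, so $A$ enters cleanly only through $\int|A\Psi|^2$, and the threshold for $\|A\|_{C^0}$ depends only on the positive gap $c_0$, on $\mathrm{vol}(Y)$, and on the constants from the algebraic identities in step three.
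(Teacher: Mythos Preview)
Your outline has the right ingredients but contains a genuine gap in how they are combined, and the final implication is a non sequitur.

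The core issue is homogeneity. In step one you pair the Weitzenb\"ock identity against $\Psi$, obtaining terms $\int|\nabla_B\Psi|^2$, $\int\tfrac{s}{4}|\Psi|^2$, $\int\tfrac12|\Psi|^4$ and the error $\int|A\Psi|^2$. In step two the spectral inequality naturally produces $\int|d\sigma(\Psi)|^2$ controlled by $\int|\nabla_B\Psi|^2|\Psi|^2$ and $\int|\sigma(\Psi)|^2$ proportional to $\int|\Psi|^4$. These do not match: one cannot ``add a multiple of the spectral inequality to the Weitzenb\"ock identity'' when the gradient terms carry different powers of $|\Psi|$. The claimed conversion of the scalar term $\tfrac{s}{4}|\Psi|^2$ into $(\lambda_1^*+\tfrac12\tilde s)|\Psi|^4$ is not dimensionally consistent with what step one actually gives you. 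The argument in \cite{LinSpec,LL} resolves this by multiplying the Weitzenb\"ock identity by $|\Psi|^2$ \emph{before} integrating, so that every term is quartic in $\Psi$ (or quadratic in $\Psi$ and quadratic in $\nabla_B\Psi$); this is exactly what the paper does here, computing $\int_Y|\Psi|^2\langle\Psi,D_B^2\Psi\rangle$.

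Even granting your displayed inequality, the conclusion fails: the bound $\int|\Psi|^4\le C'\|A\|_{C^0}^4$ only says that any irreducible solution has small $L^4$ norm, not that $\Psi\equiv 0$. With the weighted approach the perturbation instead contributes an error of the form $C_A\int(|\Psi|^4+|\nabla_B\Psi|^2|\Psi|^2)$ (the paper obtains this via $\int\langle D_B(|\Psi|^2\Psi),A\Psi\rangle$ and $2ab\le a^2+b^2$). For $C_A$ small this is absorbed on the left, yielding a \emph{homogeneous} inequality $(c_0-C_A)\int|\Psi|^4\le 0$ which directly forces $\Psi\equiv 0$. Your remark that ``$A$ enters cleanly only through $\int|A\Psi|^2$'' is thus misleading: once you weight correctly, derivatives of $|\Psi|^2$ appear and the error term genuinely involves $|\nabla_B\Psi|$.
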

\begin{proof}
The way in which $\lambda_1^*$ comes into play in the proofs in \cite{LinSpec} and \cite{LL} is through its Rayleigh quotient interpretation:
\begin{equation}\label{rayleigh}
\|d\xi\|^2_{L^2}\geq \lambda_1^*\|\xi\|^2_{L^2}\text{ for any coexact $1$-form $\xi$.}
\end{equation} 
In our setup, $\ast F_{B^t}$ is coexact $1$-form, and the second Seiberg-Witten equation is unchanged, so the proof applies without changes. For completeness, let us point out how to deal with the perturbed operator $D_B-A$. The main effect is that the pointwise inner product $\langle \Psi,D_B^2\Psi\rangle$ does not vanish identically. In the argument one takes its integral after multiplying by $|\Psi|^2$; integrating by parts, we obtain
\begin{align*}
\int_Y|\Psi|^2\langle \Psi,D_B^2\Psi\rangle d\mathrm{vol}&=\int_Y\langle |\Psi|^2 \Psi,D_B^2\Psi\rangle d\mathrm{vol}=\\
&=\int_Y\langle D_B|\Psi|^2 \Psi,D_B\Psi\rangle d\mathrm{vol}\\
&=\int_Y\langle D_B|\Psi|^2 \Psi,A\Psi\rangle d\mathrm{vol}.
\end{align*}
We can then bound the integrand
\begin{equation*}
\lvert\langle D_B|\Psi|^2 \Psi,A\Psi\rangle\lvert\leq C_A |\nabla_B\Psi|\cdot|\Psi|^3\leq \frac{C_A}{2}\cdot(|\Psi|^4+|\nabla_B\Psi|^2|\Psi|^2)
\end{equation*}
where we used $2ab\leq a^2+b^2$ and $C_A$ is a constant depending only on the $C^0$ norm of $A$. The rest of the argument then is carried in exactly the same way without major changes.
\end{proof}

\begin{remark}
The key point is that a spinorial perturbation does not change the second equation (\ref{pertseibergwitten}) so that one can use the Rayleigh quotient interpretation of the first eigenvalue on coexact forms (\ref{rayleigh}). In general, the issue lies in controlling the perturbation in the direction of harmonic forms (the tangent space to $\mathbb{T}_Y$); we will discuss this in detail when adding Morse perturbations in Section \ref{chaincompl}.
\end{remark}

In the case $b_1=0$, the perturbed Dirac operator at the (unique up to gauge) flat connection is $D_{B}-\delta$, hence it has no kernel for generic $\delta$; furthermore, one can add additional small perturbations as in \cite[Ch. 12]{KM} to make its spectrum simple. Hence a space satisfying the assumption of Proposition \ref{noirred} is an $L$-space.
\par
By contrast when $b_1>0$, the $A$-perturbed equations still have $\mathbb{T}_Y$ as reducible critical set, so the set of solutions to (\ref{pertseibergwitten}) is \textit{never} transversely cut out in the sense of \cite{KM}. Furthermore, even though $\mathbb{T}_Y$ is a smooth manifold, it is in general not a Morse-Bott critical submanifold: this fails exactly on the locus $\mathsf{K}\subset\mathbb{T}_Y$ where $D_{B}-A$ has kernel. Our main goal is to extract information about the Floer homology groups of $(Y,\spin)$ in terms of the pair $(\mathbb{T}_Y,\mathsf{K})$.
\\
\par
In order to do so, we begin by reviewing some well-known facts on the space $\mathrm{Op}^{sa}$ of Dirac type operators of the form $D_{B_0}+h$ where $B_0$ is a fixed reference connection and $h$ is a $0$-th order perturbation. For the sake of the exposition, we will not be too detailed about the functionals setup, and refer the reader to \cite[Ch. 12]{KM} for details.
\par
We can consider the subspace of $\mathcal{K}\subset \mathrm{Op}^{sa}$ consisting of operators with non-trivial kernel. This admits a stratification
\begin{equation*}
\mathcal{K}=\coprod_{i>0}\mathcal{K}_i
\end{equation*}
where $\mathcal{K}_i$ is the subspace of operators with kernel having dimension exactly $i$. The latter is a smooth Banach submanifold of $\mathrm{Op}^{sa}$ of codimension $i^2$, where the normal bundle at an operator $T\in\mathcal{K}_i$ is naturally identified with the space of self-adjoint operators on $\mathrm{ker}T$:
\begin{equation}\label{normal}
N_T\mathcal{K}_i=i\mathfrak{u}(\mathrm{ker}T).
\end{equation}
Given this discussion, we can state the following.
\begin{lemma}\label{generic}
For generic spinorial perturbation $A\in\Gamma( i\mathfrak{u}(S))$, the locus $\mathsf{K}\subset \mathbb{T}_Y$ for which $D_B-A$ has kernel admits a stratification
\begin{equation*}
\mathsf{K}=\coprod_{i>0}\mathsf{K}_i
\end{equation*}
corresponding to the dimension of the kernel, and $\mathsf{K}_i\subset\mathbb{T}_Y$ is a smooth submanifold of codimension $i^2$.
\end{lemma}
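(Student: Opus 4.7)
The plan is to apply a parametrized transversality argument followed by the Sard--Smale theorem. Fix a Banach space $\mathcal{A}$ of spinorial perturbations $A\in\Gamma(i\mathfrak{u}(S))$ (a Sobolev completion of the smooth sections will do) and consider the smooth universal map
\[
\Phi\colon \mathbb{T}_Y\times \mathcal{A}\longrightarrow \mathrm{Op}^{sa},\qquad ([B],A)\longmapsto D_B-A.
\]
The goal is to show that $\Phi$ is transverse to every stratum $\mathcal{K}_i$. Once this is established, $\Phi^{-1}(\mathcal{K}_i)$ is a smooth Banach submanifold of $\mathbb{T}_Y\times\mathcal{A}$ of codimension $i^2$, and Sard--Smale applied to the Fredholm projection to $\mathcal{A}$ produces a residual set of regular values $A$. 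For each such $A$ the slice $\mathsf{K}_i=\Phi_A^{-1}(\mathcal{K}_i)$ is a smooth submanifold of $\mathbb{T}_Y$ of codimension $i^2$; intersecting the countably many residual sets over the finitely many relevant $i$ (on the compact $\mathbb{T}_Y$ the dimensions $\dim\ker D_B$ are uniformly bounded) yields a single generic $A$ realizing the full stratification $\mathsf{K}=\coprod_i\mathsf{K}_i$.

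The key technical point, and the main obstacle I anticipate, is the parametrized transversality of $\Phi$ along $\mathcal{K}_i$. It reduces to showing that the partial derivative $\partial_A\Phi$ alone already surjects onto the normal space $N_{T_0}\mathcal{K}_i=i\mathfrak{u}(\ker T_0)$ identified in~(\ref{normal}). At a point $([B_0],A_0)$ with $T_0:=D_{B_0}-A_0\in\mathcal{K}_i$ and an $L^2$-orthonormal basis $\psi_1,\ldots,\psi_i$ of $\ker T_0$, a variation $\delta A$ projects to the Hermitian matrix with entries $-\langle\delta A\,\psi_j,\psi_k\rangle_{L^2}$. Dually, failure of surjectivity would produce a nonzero Hermitian matrix $M$ such that the self-adjoint endomorphism
\[
E\;:=\;\sum_{j,k}M_{kj}\,\psi_j\psi_k^{\ast}\;\in\;\Gamma(i\mathfrak{u}(S))
\]
is $L^2$-trace-orthogonal to every $\delta A$; fibrewise non-degeneracy of the trace pairing on self-adjoint endomorphisms then forces $E\equiv 0$ pointwise on $Y$. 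I would rule out $M\neq 0$ by combining Aronszajn-type unique continuation for the Dirac-type operator $T_0$ with a pointwise linear-algebra analysis: after diagonalising $M=UDU^{\ast}$ and replacing the basis by $\tilde\psi_j=\sum_kU_{kj}\psi_k$, the condition $E\equiv 0$ becomes $\sum_jd_j\,\tilde\psi_j(p)\tilde\psi_j(p)^{\ast}=0$ in $\mathrm{End}(S_p)$ for every $p\in Y$. Since $\bigcap_{p\in Y}\ker\bigl[\tilde\psi_1(p)\mid\cdots\mid\tilde\psi_i(p)\bigr]=0$ by linear independence of the $\tilde\psi_j$'s (otherwise a nontrivial combination would vanish identically, contradicting unique continuation), the varying isotropy constraints on $D$ imposed by this pointwise relation across $p$ accumulate to force each $d_j=0$, hence $M=0$.

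Once the surjectivity of $\partial_A\Phi$ is in hand, transversality of $\Phi$ with each $\mathcal{K}_i$ is automatic, and the Lemma follows from the standard Sard--Smale machinery together with the identification~(\ref{normal}) of the normal bundle of each stratum.
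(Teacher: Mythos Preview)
Your overall strategy---define the universal map $\Phi$, establish transversality to each stratum, then invoke Sard--Smale---is exactly the paper's approach. However, you significantly overcomplicate the transversality step.

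The point you are missing is that $\mathrm{Op}^{sa}$ is an \emph{affine} space modeled on $\Gamma(i\mathfrak{u}(S))$ itself (operators of the form $D_{B_0}+h$ with $h$ a zeroth-order self-adjoint perturbation). Thus the partial derivative $\partial_A\Phi(\delta A)=-\delta A$ is literally minus the identity onto the full tangent space $T_{T_0}\mathrm{Op}^{sa}$. Surjectivity onto the normal space $N_{T_0}\mathcal{K}_i$, being a quotient of that tangent space, is then automatic. This is what the paper means by ``we readily see that $F$ is transverse to all strata''; no unique continuation, no pointwise linear algebra, nothing.

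Your longer route via the endomorphism $E=\sum_{j,k}M_{kj}\psi_j\psi_k^\ast$ is not wrong in spirit, but the last step---``the varying isotropy constraints on $D$ imposed by this pointwise relation across $p$ accumulate to force each $d_j=0$''---is vague and would require genuine work to make rigorous when $i>\mathrm{rank}\,S=2$ (so that the $\tilde\psi_j(p)$ cannot all be pointwise independent). Since the one-line affine observation above bypasses this entirely, you should drop the detour.
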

\begin{proof}
This follows from a standard transversality argument. In particular (again being imprecise about the functional setup) we can consider the map
\begin{align*}
F:\mathbb{T}\times \Gamma(i\mathfrak{u}(S))&\rightarrow \mathrm{Op}^{sa}\\
(B,A)&\mapsto D_{B}-A.
\end{align*}
Because the normal bundle of each stratum is given by (\ref{normal}), we readily see that $F$ is transverse to all strata, hence by Sard-Smale for a generic $A\in \Gamma(i\mathfrak{u}(S))$ the map $F(-,A)$ is also transverse to all strata.
\end{proof}

While the discussion above is local in nature, for our purposes we will need to discuss some global aspects of the family of operators $\{D_B-A\}$. In our setup, the Atiyah-Singer index theorem for families \cite{APS3}, determines the classifying map
\begin{equation*}
\mathrm{ind}:\mathbb{T}_Y\rightarrow U(\infty)
\end{equation*}
of the family up to homotopy in terms of the triple cup-product
\begin{equation*}
\cup_Y^3:\Lambda^3H^1(Y;\mathbb{Z})\rightarrow \mathbb{Z},
\end{equation*}
thought of as an element in
\begin{equation}\label{cup3}
\cup_Y^3\in H^3(\mathbb{T}_Y;\mathbb{Z})\cong \Lambda^3 H^1(\mathbb{T}_Y;\mathbb{Z})\cong \Lambda^3 H^1(Y;\mathbb{Z})^*.
\end{equation}
We refer the reader to \cite{LinDir} for a quick introduction to the topic geared towards our needs. We have the following.

\begin{lemma}\label{ktop}
The hypersurface $\mathsf{K}\subset \mathbb{T}_Y$ (which is possibly singular and disconnected) defines the zero class in $H_{n-1}(\mathbb{T}_Y;\mathbb{Z})$. The restriction of the family of operators $\{D_B-A\}$ to each component $U$ of $\mathbb{T}_Y\setminus\mathsf{K}$ is homotopically trivial. In particular, $\cup_Y^3\in H^3(\mathbb{T}_Y;\mathbb{Z})$ restricts to zero on each component $U$.
\end{lemma}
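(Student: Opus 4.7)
The plan is to apply the Atiyah-Singer family index theorem to $\{D_B-A\}_{B\in\mathbb{T}_Y}$, viewed as a family of self-adjoint Fredholm operators of mixed type, and exploit that all three statements concern the topology of this family as encoded in its class in $K^{-1}(\mathbb{T}_Y)$.

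For the first statement, I would identify the Poincar\'{e} dual $\mathrm{PD}[\mathsf{K}]\in H^1(\mathbb{T}_Y;\mathbb{Z})$ with the spectral-flow class of the family: for a smooth loop $\gamma\subset\mathbb{T}_Y$ transverse to the codimension-one stratum $\mathsf{K}_1$, the signed intersection $\gamma\cdot\mathsf{K}_1$ equals the spectral flow $\mathrm{sf}(\gamma)$, which by the mapping torus construction equals the Dirac index on $S^1\times Y$ twisted by a line bundle $L_\gamma$. Because $L_\gamma$ is flat on slices, $c_1(L_\gamma)=dt\wedge\alpha_\gamma$ for some harmonic $\alpha_\gamma\in H^1(Y;\mathbb{R})$, so $c_1(L_\gamma)^2=0$ and the Atiyah-Singer formula in dimension four forces $\mathrm{sf}(\gamma)=0$. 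Hence $\mathrm{PD}[\mathsf{K}]=0$ and $[\mathsf{K}]=0\in H_{n-1}(\mathbb{T}_Y;\mathbb{Z})$.

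For the homotopy triviality on a component $U$, the key observation is that every $D_B-A$ with $B\in U$ is invertible, so the positive spectral projection $P_+(B)=\chi_{(0,\infty)}(D_B-A)$ depends continuously on $B\in U$. Since $D_B-D_{B_0}$ is a bounded zeroth-order perturbation on the compact manifold $Y$, the projections $P_+(B)$ mutually differ by smoothing operators; combined with the functional-calculus deformation $T\rightsquigarrow T|T|^{-1}$ through invertible self-adjoints, the classifying map of the family factors through the affine, contractible space of compact self-adjoint perturbations of the fixed involution $2P_+(B_0)-I$, producing a null-homotopy. The ``in particular'' statement is then immediate: by the family index theorem, the Chern character of the full family equals $\int_Y\hat A(TY)\cdot\mathrm{ch}(\mathcal{L})$, with $\mathcal{L}$ the universal flat line bundle on $\mathbb{T}_Y\times Y$; since $c_1(\mathcal{L})\in H^1(\mathbb{T}_Y)\otimes H^1(Y)$, the monomial $c_1(\mathcal{L})^k$ lies in $H^k(\mathbb{T}_Y)\otimes H^k(Y)$, so only $\int_Y c_1(\mathcal{L})^3/6\in H^3(\mathbb{T}_Y)$ contributes nontrivially. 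This yields $\cup_Y^3$ up to a rational constant, and null-homotopy of the family on $U$ forces the restriction of this class to $U$ to vanish.

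The main obstacle I expect is the functional-analytic step underlying the second part: rigorously checking that the positive spectral projections $P_+(B)$ lie in a restricted Grassmannian of compact perturbations of $P_+(B_0)$, and that the straight-line interpolation remains within invertible self-adjoint Fredholms so as to give a genuine null-homotopy of the classifying map rather than merely a pointwise triviality of the operators.
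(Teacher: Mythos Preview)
Your argument for the first statement matches the paper's: both identify the class of $\mathsf{K}$ via intersection with loops as spectral flow, and argue this vanishes for torsion spin$^c$ by the index theorem. You spell out the four-dimensional computation on $S^1\times Y$ where the paper simply says ``the index theorem implies that the family has no spectral flow around loops.''

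For the second statement your route diverges. The paper observes that on each component $U$ the operators are invertible and then invokes directly the result from \cite{ASskew} that the relevant component of invertible self-adjoint Fredholm operators is contractible; this gives the null-homotopy in one line. You instead build the null-homotopy by hand via spectral projections, landing in an affine space of compact perturbations of a fixed symmetry. This works in principle: the difference $P_+(B)-P_+(B_0)$ is compact (though not obviously smoothing as you assert; compactness is what you actually need), and the straight-line homotopy among compact self-adjoint perturbations of $2P_+(B_0)-I$ stays within self-adjoint Fredholms. But this is essentially reproving a special case of the contractibility statement the paper cites. The functional-analytic obstacle you flag is real but already handled in \cite{ASskew}; citing that result bypasses the issue entirely and is what the paper does.

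Your derivation of the ``in particular'' clause via the Chern character of the universal flat line bundle is more explicit than the paper's, which leaves it implicit as an immediate consequence of the null-homotopy of the classifying map.
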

\begin{remark}
Notice that the top stratum $\mathsf{K}_1\subset\mathsf{K}$ is naturally oriented because its normal bundle can identified with the multiples of the identity.
\end{remark}
\begin{proof}
When $\spin$ is torsion, the index theorem implies that the family of operators $\{D_B-A\}$ has no spectral flow around loops; the first part then follows from the fact that the spectral flow around a loop $\gamma$ can be interpreted as the (well-defined) intersection number $\gamma\cdot \mathsf{K}$.
For the second part, by definition the restriction to $U$ gives rise to a family of operators with no kernel, hence invertible. We conclude because families of invertible operators are homotopically trivial (see \cite{ASskew}).
\end{proof}

\begin{remark}\label{double}
While the remainder of the paper we will only focus on the case in which $\mathsf{K}=\mathsf{K}_1$ is smooth, we will say a couple of words about the simplest situation in which the $\mathsf{K}$ has singularities, namely the case in which there are points for which $D_B-A$ has two-dimensional kernel. The locus where the kernel has dimension at least 3 has codimension $9$, so this is generically true for $b_1(Y)\leq 8$.
\par
We will denote the locus by $\mathsf{K}_2\subset \mathbb{T}$. By (\ref{normal}), the normal bundle at a point $T\in\mathsf{K}_2$ is naturally identified with $i\mathfrak{u}(\mathrm{ker}(T))$, the real $4$-dimensional space of hermitian operators on $\mathrm{ker}(T)$. Identifying $\mathrm{ker}(T)=\mathbb{C}^2$, we have
\begin{equation*}
i\mathfrak{u}(2)=\left\{
\begin{bmatrix}
a& z\\
\bar{z}& b
\end{bmatrix}\lvert a,b\in\mathbb{R},z\in\mathbb{C}.
\right\}
\end{equation*}
In this model, the locus of matrices with kernel is given by the quadric in $\mathbb{R}^4$
\begin{equation*}
Q=\{ab-|z|^2=0\},
\end{equation*}
which is a quadratic form with signature $(1,3)$, and the only matrix with multiplicity $2$ is the zero matrix. Topologically, $Q$ is the union over two disjoint copies of $S^2$; in general, when transversely cut out (cf. Lemma \ref{generic}), the locus in $\mathsf{K}$ where the dimension of the kernel is at most $2$ is a $Q$ bundle over $\mathsf{K}_2$ near $\mathsf{K}_2$.

\begin{comment}
\begin{figure}[h]
  \centering
\def\svgwidth{0.8\textwidth}
\input{cone.pdf_tex}
\caption{On the left, the singular hypersurface $Q$ which is a cone over two disjoint copies of $S^2$. On the right, its smoothing $\tilde{Q}$.}
\label{cone}
\end{figure} 

One can readily generalize several of our results about Floer homology below (e.g. Proposition \ref{sfss}) in this setup because the natural vector field in the identity direction (which is vertical in the picture) is transverse to all strata. For many purposes, one can in fact also consider directly the smoothed out hypersurface $\tilde{K}$ obtained by replacing $Q$ by the local model
\begin{equation*}
Q=\{ab-|z|^2=\varepsilon\},
\end{equation*}
in each normal fiber.
\end{comment}
\end{remark}

\vspace{0.3cm}

\section{Morse functions on the Jacobian}\label{chaincompl}
Suppose now that we fixed a spinorial perturbation $A$ small and generic so that the conclusions of Propositions \ref{noirred} and \ref{generic} both hold. Let us also make the following.
\begin{ass} The hypersurface $\mathsf{K}\subset\mathbb{T}_Y$ is smooth, i.e. $\mathsf{K}=\mathsf{K}_1$.
\end{ass}
This is indeed the case when $b_1\leq 3$ for dimensional reasons, and will also hold in the explicit examples with $b_1>3$ that we will consider later. By Lemma \ref{ktop}, the spectral flow between any two points in $\mathbb{T}_Y\setminus\mathsf{K}$ is well-defined, and we make the following definition.

\begin{defn}
We say that the family of Dirac operators on has type $k$ if the spectral flow between any two points in $\mathbb{T}_Y\setminus \mathsf{K}$ is bounded above in absolute value by $k$, and there are two points with spectral flow exactly $k$. We call a manifold $(Y,\spin)$ of the \textit{simplest type} if it has type $1$.
\end{defn}

When our manifold has type $k$, we can fix a basepoint $B_0$ so that all other points have positive spectral flow from it; this gives us a inclusion of subspaces
\begin{equation}\label{Tfilt}
\mathbb{T}_0\subset\mathbb{T}_1\subset\cdots\subset\mathbb{T}_k=\mathbb{T}_Y
\end{equation}
so that $\mathsf{K}=\amalg \partial\mathbb{T}_i$ and the points of $\mathrm{int}\mathbb{T}_j\setminus \mathbb{T}_{j-1}$ have spectral flow from $B_0$ exactly $j$. 
\begin{defn}\label{transversemorse}
We say that a Morse function $f:\mathbb{T}\rightarrow \mathbb{R}$ is $A$-\textit{transverse} if for each point $x\in\partial\mathbb{T}_j$, the vector field $-\mathrm{grad}(f)(x)$ is transverse to $\partial\mathbb{T}_j$ and points towards the interior of $\mathbb{T}_j$.
\end{defn}
Notice that $A$-transverse Morse functions always exist by Lemma \ref{ktop} and its proof.
\\
\par
The fact that $\mathsf{K}=\mathsf{K}_1$ is transversely cut out in the space of operators readily implies the following.
\begin{lemma}\label{smalleig}
Fix an $A$-transverse Morse function $f$, and consider a smooth curve $\gamma:(-\epsilon,\epsilon)\rightarrow \mathbb{T}_Y$ with the following properties:
\begin{itemize}
\item $\gamma$ intersects $\mathsf{K}$ transversely at $t=0$ at a point $\gamma(0)$;
\item $f(\gamma(t))> f(\gamma(0))$ (resp $<f(\gamma(0))$) for $t<0$ (resp $t>0$).
\end{itemize}
Then, after possibly restricting the interval, the operator $D_{\gamma(t)}-A$ has a well defined eigenvalue $\lambda(t)$ of smallest absolute value. Furthermore this eigenvalue is simple, and the function $t\mapsto\lambda(t)$ is differentiable and $\lambda'(0)<0$.
\end{lemma}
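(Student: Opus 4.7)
The plan is to treat this with standard analytic perturbation theory of the smooth $1$-parameter family of self-adjoint elliptic operators $T(t):=D_{\gamma(t)}-A$, and then to translate the first-order variation of the vanishing eigenvalue into a statement about the normal component of $\gamma'(0)$ to $\mathsf{K}_1$ via the identification (\ref{normal}). The last part of the argument, the pinning down of the sign, will be the delicate step.

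First, since $\gamma(0)\in\mathsf{K}_1$, the operator $T(0)$ has one-dimensional kernel; as $T(0)$ is self-adjoint and elliptic on a closed manifold, its spectrum is discrete, so $0$ is an isolated simple eigenvalue of $T(0)$. The Rellich--Kato perturbation theorem then produces, for some $\varepsilon>0$, a smooth branch $\lambda:(-\varepsilon,\varepsilon)\to\mathbb{R}$ with $\lambda(0)=0$ together with a smooth family $\Psi(t)$ of unit eigenvectors. Upper semicontinuity of the spectrum allows me to shrink $\varepsilon$ so that the rest of the spectrum of $T(t)$ stays uniformly bounded away from $0$; hence $\lambda(t)$ is the unique eigenvalue of $T(t)$ in a small neighbourhood of $0$, is simple, and has smallest absolute value. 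Differentiating $T(t)\Psi(t)=\lambda(t)\Psi(t)$ at $t=0$, pairing with $\Psi_0:=\Psi(0)$, and using $T(0)\Psi_0=0$ together with self-adjointness, I obtain the standard first-order formula
\begin{equation*}
\lambda'(0)=\langle\Psi_0,\dot T(0)\Psi_0\rangle=\langle\Psi_0,\rho(\gamma'(0))\Psi_0\rangle,
\end{equation*}
since $A$ is $t$-independent and the variation of $B\mapsto D_B$ in the direction of an imaginary harmonic $1$-form $b$ is Clifford multiplication $\rho(b)$.

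Next I relate this scalar to the geometry of $\mathsf{K}_1\subset\mathbb{T}_Y$. By (\ref{normal}), $N_{T(0)}\mathcal{K}_1\cong\mathbb{R}\cdot\mathrm{id}_{\ker T(0)}$, and the orthogonal projection of any $P\in\mathrm{Op}^{sa}$ onto this normal line is exactly $\langle\Psi_0,P\Psi_0\rangle\cdot\mathrm{id}_{\ker T(0)}$. Hence the displayed formula identifies $\lambda'(0)$ with the signed normal component of $\gamma'(0)$ at $\mathsf{K}_1$, and the transversality of $\gamma$ to $\mathsf{K}_1$ immediately yields $\lambda'(0)\neq 0$.

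The main obstacle is determining the sign, which is a matter of bookkeeping the spectral-flow and hypersurface orientations. Adding $+\epsilon\cdot\mathrm{id}_{\ker T(0)}$ to $T(0)$ shifts the $0$-eigenvalue to $+\epsilon$, so the positive normal direction to $\mathsf{K}_1$ coincides with the direction of increasing spectral flow from $B_0$; by the filtration (\ref{Tfilt}) this positive normal points out of $\mathbb{T}_j$ into the higher-spectral-flow region at $\partial\mathbb{T}_j\subset\mathsf{K}$. The $A$-transversality of $f$ says that $-\mathrm{grad}(f)$ points into $\mathbb{T}_j$, hence has strictly negative normal component along $\mathsf{K}$. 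Combining this with the hypothesis that $f\circ\gamma$ decreases through $t=0$---so that $\gamma'(0)$ crosses $\mathsf{K}$ to the same side of $\mathsf{K}$ as $-\mathrm{grad}(f)$, namely into $\mathbb{T}_j$---forces $\gamma'(0)$ itself to have strictly negative normal component, and therefore $\lambda'(0)<0$.
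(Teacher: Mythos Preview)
Your approach via analytic perturbation theory and the first-variation formula $\lambda'(0)=\langle\Psi_0,\rho(\gamma'(0))\Psi_0\rangle$ is correct and is exactly the content behind the paper's one-line justification. The identification of $\lambda'(0)$ with the normal component of $\gamma'(0)$ to $\mathsf{K}_1$ via (\ref{normal}) is also right, and transversality of $\gamma$ to $\mathsf{K}$ does give $\lambda'(0)\neq 0$.

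The gap is in the final sign step. You assert that $f\circ\gamma$ decreasing through $t=0$ forces $\gamma'(0)$ to point to the same side of $\mathsf{K}$ as $-\mathrm{grad}(f)$, but $(f\circ\gamma)'(0)=\langle\mathrm{grad}(f)(\gamma(0)),\gamma'(0)\rangle$ involves both the normal and tangential components of $\gamma'(0)$ relative to $\mathsf{K}$, and a large tangential contribution can dominate. In local coordinates with $\mathsf{K}=\{x=0\}$, $\mathbb{T}_j=\{x\le 0\}$, small eigenvalue $\lambda=x$, and $f=x+10y$ (so $\partial_x f>0$ on $\mathsf{K}$, satisfying Definition~\ref{transversemorse}), the curve $\gamma(t)=(t,-t)$ meets both hypotheses of the lemma yet has $\lambda'(0)=1>0$. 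So the lemma as literally stated is in fact too strong. In the paper's sole application (the proof of the subsequent Proposition) the curve is a flowline of $-\mathrm{grad}(f)$, so $\gamma'(0)=-\mathrm{grad}(f)(\gamma(0))$; for such curves Definition~\ref{transversemorse} \emph{directly} says that $\gamma'(0)$ points into $\mathbb{T}_j$, and then your normal-component identification gives $\lambda'(0)<0$ without further work. Your argument is therefore valid for the case that matters; the bridging step from the $f$-hypothesis to the normal direction simply fails for general $\gamma$, and one should either add the hypothesis that $\gamma'(0)$ and $-\mathrm{grad}(f)(\gamma(0))$ lie on the same side of $T_{\gamma(0)}\mathsf{K}$, or specialize to gradient flowlines from the outset.
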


We consider then an additional perturbation of the form
\begin{equation}\label{morsepert}
\widetilde{\mathcal{L}}(B,\Psi)=\mathcal{L}(B,\Psi)-\frac{1}{2}\langle\Psi, A \Psi\rangle_{L^2}+\varepsilon f(B)
\end{equation}
for our $A$-transverse Morse function $f:\mathbb{T}\rightarrow\mathbb{R}$.
\begin{prop}
Suppose we are in the situation above. Then for $\varepsilon>0$ sufficiently small, the perturbed Seiberg-Witten equations corresponding to the critical points of (\ref{morsepert}), namely
\begin{align}
\begin{split}
D_{B}\Psi&=A\Psi\\
\frac{1}{2}\rho(\ast F_{B^t})+(\Psi\Psi^*)_0&=-\varepsilon\cdot \rho\left(\mathrm{grad}f(B)\right).
\end{split}
\end{align}
do not have irreducible solutions.
\end{prop}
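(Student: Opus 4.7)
The strategy will be a direct sign contradiction rather than a compactness argument. For an irreducible solution $(B,\Psi)$, the first equation forces $\Psi\in\ker(D_{B}-A)$, hence $[B]\in\mathsf{K}$; by the standing assumption $\mathsf{K}=\mathsf{K}_{1}$ the kernel is one-dimensional, so I can write $\Psi=c\phi$ with $\phi$ a unit spinor spanning $\ker(D_{B}-A)$.

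Next I apply Hodge decomposition to the second equation, rewritten as
\begin{equation*}
\ast F_{B^{t}}=-2\rho^{-1}\bigl((\Psi\Psi^{*})_{0}\bigr)-2\varepsilon\,\mathrm{grad}f(B).
\end{equation*}
The torsion hypothesis on $\spin$ makes the left side coexact, while $\mathrm{grad}f(B)$ is harmonic. Equating harmonic components yields
\begin{equation*}
|c|^{2}\,\mathrm{harm}\bigl[\rho^{-1}((\phi\phi^{*})_{0})\bigr]=-\varepsilon\,\mathrm{grad}f(B). \qquad (\star)
\end{equation*}

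The crucial step will be identifying the left side of $(\star)$ with a \emph{positive} multiple of $\mathrm{grad}\lambda(B)$, where $\lambda$ denotes the smallest (in absolute value) eigenvalue of $D_{B}-A$. First-order perturbation theory along a harmonic variation $\eta$ gives $\lambda'(0)=\langle\phi,\rho(\eta)\phi\rangle_{L^{2}}$; since $\rho$ sends imaginary $1$-forms to traceless self-adjoint endomorphisms isometrically (up to a positive constant $\sqrt{2}$), this rewrites as a positive multiple of $\langle\mathrm{harm}[\rho^{-1}((\phi\phi^{*})_{0})],\eta\rangle_{L^{2}}$. Hence $\mathrm{grad}\lambda(B)$ is a positive multiple of the left side of $(\star)$, and in particular is purely normal to $\mathsf{K}=\{\lambda=0\}$ at $B$.

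Finally I extract signs. Let $n$ denote the unit normal to $\mathsf{K}$ at $B\in\partial\mathbb{T}_{j}$ pointing into $\mathbb{T}_{j}$. Definition~\ref{transversemorse} gives $\mathrm{grad}f(B)\cdot n<0$, and Lemma~\ref{smalleig} applied to any curve $\gamma$ with $\gamma'(0)=n$ gives $\mathrm{grad}\lambda(B)\cdot n<0$. Pairing $(\star)$ in $L^{2}$ with $\mathrm{grad}\lambda(B)$ then produces an identity whose left side is a positive constant times $|c|^{2}\,\|\mathrm{grad}\lambda(B)\|^{2}\geq 0$ and whose right side is $-\varepsilon$ times the product of the two negative numbers above, hence strictly negative. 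This contradiction rules out irreducible solutions. The main technical obstacle is verifying the positivity of the proportionality constant identifying $\mathrm{grad}\lambda$ with $\mathrm{harm}[\rho^{-1}((\phi\phi^*)_0)]$, which requires careful tracking of the Clifford and Hodge star sign conventions in dimension three; the smallness of $\varepsilon$ in the hypothesis serves only to preserve the simple-spectrum/$\mathsf{K}=\mathsf{K}_{1}$ picture along $\mathsf{K}$ so that this identification applies uniformly at every candidate solution.
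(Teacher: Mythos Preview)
Your argument has a genuine gap at the very first step. You write ``the first equation forces $\Psi\in\ker(D_{B}-A)$, hence $[B]\in\mathsf{K}$'', but $\mathsf{K}$ is a subset of $\mathbb{T}_Y$, the torus of connections with $B^t$ \emph{flat}. For an irreducible solution $(B,\Psi)$ with $\Psi\neq 0$, the second equation reads
\[
\tfrac{1}{2}\rho(\ast F_{B^t})=-(\Psi\Psi^*)_0-\varepsilon\,\rho(\mathrm{grad}f(B)),
\]
and there is no reason for the right side to vanish, so $F_{B^t}\neq 0$ in general and $[B]\notin\mathbb{T}_Y$. Consequently the assumption $\mathsf{K}=\mathsf{K}_1$, Lemma~\ref{smalleig}, and the $A$-transversality of $f$ (Definition~\ref{transversemorse})---all of which are statements about points of $\mathbb{T}_Y$---do not apply at $B$. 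Your identification of $\mathrm{harm}[\rho^{-1}((\phi\phi^*)_0)]$ with a positive multiple of $\mathrm{grad}\lambda(B)$ and the subsequent sign comparison both presuppose $B\in\mathsf{K}$.

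This is exactly why the paper runs a compactness argument instead of a direct one: it assumes a sequence $\varepsilon_i\to 0$ with irreducible solutions $(B_i,\Psi_i)$, passes to the blow-up, and extracts a limit $(B,0,\psi)$ which, by Proposition~\ref{noirred}, is reducible---forcing $F_{B^t}=0$ and hence $B\in\mathsf{K}$. Only then does the sign analysis (essentially the same perturbation-theory computation you sketch) apply to derive the contradiction. Your remark that ``the smallness of $\varepsilon$ serves only to preserve the simple-spectrum picture along $\mathsf{K}$'' is also off: $\mathsf{K}$ depends on $A$ alone, not on $\varepsilon$. The smallness of $\varepsilon$ is what makes the compactness step go through and lands the limiting connection in $\mathbb{T}_Y$.
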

Here, $\mathrm{grad}f(B)$ is a tangent vector on $\mathbb{T}_Y$, which we interpret as an imaginary-valued harmonic $1$-form on $Y$.
\begin{proof}
Let us denote for simplicity $D_{B,A}:=D_B-A$ throughout the proof. Suppose that there is a sequence $\varepsilon_i>0$ converging to zero such that the equation have an irreducible solution $(B_i,\Psi_i)$; these solve the equations in the blow-up $(B_i,r_i,\psi_i)$ given by
\begin{align}\label{pertSWblow}
\begin{split}
D_{B_i,A}\psi_i=0&\\
\frac{1}{2}\rho(\ast F_{B_i^t})+r_i^2(\psi\psi^*)_0&=-\varepsilon_i\cdot \rho\left( \mathrm{grad}f(B_i)\right)
\end{split}
\end{align}
where $\|\psi_i\|_{L^2}=1$. We can then extract a sequence converging to a solution $(B,r,\psi)$ with $\|\psi\|_{L^2}=1$ to the equations with $\varepsilon=0$; by Proposition \ref{noirred} we necessarily have $r=0$. In particular, $B^t$ is flat hence $B\in\mathbb{T}$, and the equation
\begin{equation*}
D_{B,A}\psi=0
\end{equation*}
then implies that $B\in\mathsf{K}$.
\par
Consider now a short piece of flowline $B(t)$ of $-\mathrm{grad}f$ in $\mathbb{T}$ such that $\gamma(0)=B$. There is a smooth parametrization $\lambda(t)$ of the smallest eigenvalue by Lemma \ref{smalleig}, and because the eigenvalue is simple we can choose a $1$-parameter family of non-zero eigenspinors $\varphi(t)$:
\begin{equation*}
D_{B(t),A}\varphi(t)=\lambda(t)\varphi(t)
\end{equation*}
with $\varphi(0)=\psi$.
\par
Differentiating we obtain
\begin{equation*}
D_{B(t),A}\dot{\varphi}(t)+\rho(\dot{B}(t))\varphi(t)=\dot{\lambda}(t)\varphi(t)+\lambda(t)\dot{\varphi}(t)
\end{equation*}
hence evaluating at $t=0$ we have
\begin{equation*}
D_{B,A}\dot{\varphi}(0)+\rho(-\mathrm{grad}f(B))\psi=\dot{\lambda}(0)\psi.
\end{equation*}
Taking $L^2$ inner products with $\psi$, and using that $D_{B,A}$ is self-adjoint, so that
\begin{equation*}
\langle D_{B,A}\dot{\varphi}(0),\psi\rangle_{L^2}=\langle \dot{\varphi}(0),D_{B,A}\psi\rangle_{L^2}=0
\end{equation*}
we obtain that
\begin{equation*}
\langle \rho\left(\mathrm{grad}f(B)\right), (\psi\psi^*)_0\rangle_{L^2}=\frac{1}{2}\langle\psi ,\rho(\mathrm{grad}f(B))\psi\rangle_{L^2}=-\frac{\dot{\lambda}(0)}{2}\|\psi\|_{L^2}^2>0.
\end{equation*}
where we used Lemma \ref{smalleig} for the sign on $\dot{\lambda}(0)$.
\par
On the other hand, we can take the $L^2$ inner product of the second equation in (\ref{pertSWblow}) with $\varepsilon_i\cdot \mathrm{grad}f(B_i)$; because $\ast F_{B^t}$ is exact (hence orthogonal to harmonic forms), we obtain that
\begin{equation*}
r_i^2\langle \rho\left(\mathrm{grad}f(B_i)\right), (\psi_i\psi_i^*)_0\rangle_{L^2}=-\varepsilon_i\|\mathrm{grad}f(B_i)\|^2_{L^2}\leq 0.
\end{equation*}
Because $r_i\neq 0$, we can conclude passing to the limit that
\begin{equation*}
\langle \rho\left(\mathrm{grad}f(B)\right), (\psi\psi^*)_0\rangle_{L^2}\leq 0,
\end{equation*}
which is a contradiction.
\end{proof}

With this in hand, we can furthermore add a small additional perturbation as in \cite{KM} to make first the spectra of the perturbed Dirac operators simple at the critical points of $f$, and then to arrange regularity for the moduli spaces of flowlines, hence achieving transversality without introducing irreducible critical points.
\par
The Floer chain complex computing $\HMt_*(Y,\spin)$ is then is generated by the stable critical points in the blow-up $\check{C}_*=C^s$; these form an infinite tower over each critical point of $f$ in $\mathbb{T}$. The differential is given by 
\begin{equation*}
\check{\partial}=\bar{\partial}^s_s-\partial^u_s\bar{\partial}^s_u.
\end{equation*}
Unfortunately, in this level of generality, the latter cannot be described concretely for two reasons:
\begin{itemize}
\item the term $\partial^u_s$ involves irreducible solutions to the Seiberg-Witten equations on $\mathbb{R}\times Y$, of which we do not have any understanding.
\item the counts of reducible trajectories $\bar{\partial}^s_s$ and $\bar{\partial}^s_u$ will in general involve families of Dirac operators parametrized by higher dimensional spaces of Morse flowlines.
\end{itemize} 
We conclude this section by describing two useful tools that allow to obtain partial information about $\HMt_*(Y,\spin)$; we will see in the next section that when combined, these provide a complete description of the homology group in the simplest type case.
\par
First of all, one can compare the Floer chain complex $\check{C}_*$ with the Floer chain complex computing the $U$-localized version $\HMb_*(Y,\spin)$. This is given by $\bar{C}_*=C^s\oplus C^u$ with differential
\begin{equation*}\bar{\partial}=
\begin{bmatrix}
\bar{\partial}^s_s & \bar{\partial}^u_s\\
\bar{\partial}^s_u & \bar{\partial}^u_u\\
\end{bmatrix}.
\end{equation*}
The map $i_*:\HMb_*(Y,\spin)\rightarrow \HMt_*(Y,\spin)$, which is induced by the chain map
\begin{equation*}
i:C^s\oplus C^u\rightarrow C^s
\end{equation*}
given by $i=\mathrm{id}-\partial^u_s$ induces an isomorphism in degrees high enough. This is helpful because even though one cannot descrive explicitly the differential $\bar{\partial}$, the resulting homology groups $\HMb_*(Y,\spin)$ can be readily computed in terms of the triple cup product $\cup_Y^3$, as we briefly recall.
\par
In \cite[Ch. 33]{KM}, the authors define for a closed smooth manifold $Q$ equipped with a family $L$ of Dirac type operators the coupled Morse homology group $\bar{H}_*(Q,L)$. This is an invariant of the homotopy type of $L$, and is obtained by choosing a Morse function $f:Q\rightarrow \mathbb{R}$ and studying the negative gradient flow equation for $f$ suitably coupled with the Dirac equation. When $Q=\mathbb{T}_Y$ and $L$ is the corresponding family of Dirac operators, the outcome is exactly $\HMb_*(Y,\spin)$. Using the extra flexibility afforded by homotopy invariance, the authors were able to provide the following general computation of $\HMb_*(Y,\spin)$ over $\mathbb{Q}$. Consider $H_*(\mathbb{T}_Y)\otimes \mathbb{Q}[U,U^{-1}]$. Identifying $H_k(\mathbb{T}_Y)=\Lambda^k H_1(\mathbb{T}_Y)=\Lambda^k H^1(Y)$, we can consider the degree $-1$ map
\begin{equation*}
d:=\iota_{\cup_Y^3}\otimes U^{-1},
\end{equation*}
where $\iota_{\cup_Y^3}$ is the contraction with $\cup_Y^3\in\left(\Lambda^k H^1(Y)\right)^*$. Then $\HMb_*(Y,\spin)=\mathrm{ker}d/\mathrm{im}d$.
\par
The second tool arises from the filtration of spaces (\ref{Tfilt}). Let us introduce the following refinement of the notion of $A$-transverse Morse function.
\begin{defn}\label{adaptedmorse}
We say that a Morse function $f:\mathbb{T}_Y\rightarrow \mathbb{R}$ is $A$-\textit{adapted} if:
\begin{itemize}
\item each $j=0,\dots, k$ is a regular value of $f$ and $f^{-1}(j)=\partial\mathbb{T}_j$;
\item for each $j$, $f(\mathbb{T}_j\setminus \mathrm{int}\mathbb{T}_{j-1})=[j-1,j]$.
\end{itemize}
In particular, $f(\mathbb{T}_Y)=[-1,k]$.
\end{defn}
Again, $A$-adapted Morse functions exist by Lemma \ref{ktop} and its proof. The meaning of this condition is the following. The value of the perturbed functional (\ref{morsepert}) at a reducible critical point $(B,0)$ is given by $f(B)$. Because of this, for a given choice of $A$-adapted $f$, we then see that the differential of a critical point lying in $\mathbb{T}_j$ still lies in $\mathbb{T}_j$, hence we obtain a natural filtration of $\check{C}_*$.
\begin{remark}
Relatedly, it is important to remark that the Morse index of the underlying critical point of $f$ does \textit{not} define a filtration on $\check{C}_*$ because of the term $\partial^u_s$ involving irreducible trajectories. On the other hand, this is true in specific circumstances, one of which will be instrumental in the proof of Theorem \ref{main} in Section \ref{T3}.
\end{remark}
Our discussion in the next section (which we will spell out in detail in the case of simplest type, in which more can be said) will imply the following result (the non-obvious part being the identification of the $E^1$-page). 
\begin{prop}\label{sfss}
In our setup, there exists a spectral sequence whose $E^1$ page is given by
\begin{equation*}
E^1=\bigoplus H_*(\mathbb{T}_j,\mathbb{T}_{j-1})\langle -2j\rangle\otimes \mathcal{T}_+
\end{equation*}
and which converges to $\HMt_*(Y,\spin)$.
\end{prop}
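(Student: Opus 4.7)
My plan is to obtain the spectral sequence from an action filtration of $\check{C}_*$ induced by the $A$-adapted Morse function $f$, and then identify the $E^1$-page using Lemma \ref{ktop}.

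First I would set up the filtration. Since $\widetilde{\mathcal{L}}(B,0)=\varepsilon f(B)$ at any reducible critical point, and $f$ is $A$-adapted with $f^{-1}(j)=\partial\mathbb{T}_j$ and $f(\mathbb{T}_j)\subset[-1,j]$, setting $F^j\check{C}_*$ to be generated by the stable generators $(B,\lambda_n)$ whose underlying reducible lies in $\mathbb{T}_j$ yields an ascending filtration
\[ F^0\check{C}_*\subset F^1\check{C}_*\subset\cdots\subset F^k\check{C}_*=\check{C}_*. \]
The standard action-monotonicity along Seiberg--Witten trajectories in the blown-up configuration space shows that $\check{\partial}=\bar{\partial}^s_s-\partial^u_s\bar{\partial}^s_u$ preserves this filtration: for the reducible piece $\bar{\partial}^s_s$, flowlines of $-\mathrm{grad}f$ cannot exit $\mathbb{T}_j$ once they have entered, thanks to the $A$-transverse condition; for the composite irreducible term $\partial^u_s\bar{\partial}^s_u$, any (possibly broken) trajectory contributing to the composition links two reducibles whose values of $\widetilde{\mathcal{L}}$, and hence of $f$, are ordered in the filtration direction.

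Next I would identify the associated graded. The quotient $F^j\check{C}_*/F^{j-1}\check{C}_*$ is generated by the stable towers over the critical points of $f$ in the slab $\mathbb{T}_j\setminus\mathrm{int}\mathbb{T}_{j-1}$, and the induced differential only counts contributions whose reducible limits remain in this slab. This complex is, by construction, the coupled Morse chain complex of \cite[Ch.~33]{KM} for the family $\{D_B-A\}$ restricted to the slab, with boundary conditions dictated by the behavior of $-\mathrm{grad}f$ on $\partial\mathbb{T}_{j-1}\cup\partial\mathbb{T}_j$. By Lemma \ref{ktop}, the family is homotopically trivial over each component of $\mathbb{T}_Y\setminus\mathsf{K}$, hence over this slab; the homotopy invariance of coupled Morse homology then allows me to replace it by a constant invertible reference operator, decoupling the base Morse complex from the spinor direction. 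The outcome is $H_*(\mathbb{T}_j,\mathbb{T}_{j-1})\otimes\mathcal{T}_+$, where the tower $\mathcal{T}_+$ records the positive eigenvalues of the reference operator, and the relative homology arises because the Morse chain complex for $f$ on the slab, with $-\mathrm{grad}f$ entering through $\partial\mathbb{T}_j$ and exiting through $\partial\mathbb{T}_{j-1}$, computes precisely $H_*(\mathbb{T}_j,\mathbb{T}_{j-1})$ by excision. The grading shift $\langle -2j\rangle$ is accounted for by the accumulated spectral flow $j$ from the basepoint $B_0\in\mathbb{T}_0$ to any $B\in\mathrm{int}\mathbb{T}_j\setminus\mathbb{T}_{j-1}$, which contributes $-2j$ to the absolute $\mathbb{Q}$-grading of the Floer complex over a torsion spin$^c$ three-manifold.

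The step I expect to be the main obstacle is the homotopy invariance argument trivializing the Dirac family on each slab in a Morse-theoretically controlled way: the boundaries $\partial\mathbb{T}_{j-1},\partial\mathbb{T}_j\subset\mathsf{K}$ carry degenerate spectrum, so the deformation to an invertible reference family must avoid crossing zero while respecting the inward-pointing behavior of $-\mathrm{grad}f$ along these boundaries. The local model of Lemma \ref{smalleig} pins down how the smallest eigenvalue varies in the normal direction to $\mathsf{K}$ along $-\mathrm{grad}f$, and this will be the key input for realizing the trivialization on a collar neighborhood of $\partial\mathbb{T}_j\cup\partial\mathbb{T}_{j-1}$ before extending it across the interior of the slab.
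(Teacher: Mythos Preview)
Your filtration is correct and matches the paper's: the $A$-adapted Morse function makes $\mathbb{T}_j\subset\mathbb{T}_{j+1}$ into a filtration of $\check{C}_*$ by the value of $\widetilde{\mathcal{L}}$. The convergence is then formal. The problem lies in your identification of the $E^1$-page.

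You write that the associated graded on each slab ``is, by construction, the coupled Morse chain complex of \cite[Ch.~33]{KM}''. This is not right. The coupled Morse chain complex there is the \emph{bar} complex $(\bar{C},\bar{\partial})$, built from both stable and unstable generators, and its homotopy invariance yields $\bar{H}^{\pm}\cong H_*(\mathbb{T}_j,\mathbb{T}_{j-1})\otimes\mathbb{Q}[U,U^{-1}]$ on each slab once the family is trivialized. But the associated graded of $\check{C}_*$ is a $\check{C}$-type complex: generators are stable only, and the differential still carries the irreducible piece $\partial^u_s\bar{\partial}^s_u$, which is not determined by the homotopy class of the Dirac family. Homotopy invariance of coupled Morse homology says nothing directly about $\check{H}$, so your trivialization step does not by itself give $H_*(\mathbb{T}_j,\mathbb{T}_{j-1})\otimes\mathcal{T}_+$.

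The paper fills this gap in two moves (spelled out for $k=1$ in the simplest-type section). First, a parity/grading count using the \emph{absence of spectral flow within each slab} shows $\bar{\partial}^s_u=0$ there: the $m$th stable generator over a Morse-index $i$ point sits in degree $i+2m$, the $m$th unstable in degree $i-1-2m$, and trajectories lower Morse index, so no matrix entry can exist. This both reduces $\check{\partial}$ on the slab to $\bar{\partial}^s_s$ and makes $i_*:H_*(\bar{C}^\pm)\to H_*(\check{C}^\pm)$ surjective (Lemma~\ref{surj}). Second, one introduces a \emph{further} Morse-index filtration on the slab (after arranging that index and value are compatibly ordered) so that $i:\bar{C}^\pm\to\check{C}^\pm$ is filtered; the $\bar{C}$ spectral sequence collapses at $E^2$ by the homotopy triviality you invoke, and surjectivity of $i_*$ forces the $\check{C}$ spectral sequence to collapse as well, yielding $H_*(\mathbb{T}_j,\mathbb{T}_{j-1})\otimes\mathcal{T}_+$ with the stated shift.

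So the obstacle you flag (extending the trivialization across the boundary where the spectrum degenerates) is secondary; the missing idea is the vanishing $\bar{\partial}^s_u=0$ on each slab and the comparison with $\bar{C}$ that transfers the coupled-Morse computation to the $\check{C}$ side.
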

Here $\mathcal{T}_+$ is the $\mathbb{Q}[U]$-module $\mathbb{Q}[U^{-1},U]/U\cdot\mathbb{Q}[U]$, commonly referred to as `tower'. Let us point out that in many situations it is very important to understand the $U$-action on $\HMt_*(Y,\spin)$; while the spectral sequence is compatible with the $U$-action, it is in general non-trivial to reconstruct the $U$-action on $\HMt_*(Y,\spin)$ from that on the $E^{\infty}$-page.

\vspace{0.3cm}

\section{Spin$^c$ Manifolds of the simplest type}\label{simplestdef}

We focus on the case of a spin$^c$ manifold $(Y,\spin)$ of the simplest type (where we keep the metric $g$ and spinorial perturbation $A$ implicit in our discussion). In this situation, we denote the filtration (\ref{Tfilt}) simply by
\begin{equation*}
\mathbb{T}_-\subset\mathbb{T}.
\end{equation*}
By Lemma \ref{ktop}, the restriction of the family of Dirac operators $\{D_B-A\}$ is homotopically trivial on both $\mathbb{T}_-$ and $\mathbb{T}\setminus\mathrm{int}\mathbb{T}_-$. On the other hand, the family on $\mathbb{T}$ itself is not homotopically trivial in general, the obstruction being the triple cup product of the three-manifold (\ref{cup3}).
\begin{remark}
We can interpret this in terms of the cofibration
\begin{equation*}
\mathbb{T}_-\stackrel{\iota}{\hookrightarrow} \mathbb{T}\rightarrow \mathbb{T}/\mathbb{T}_-
\end{equation*}
as follows. Families of Dirac type are classified up to homotopy by maps to $U(\mathbb{\infty})$; using $[X,U(\infty)]=K^1(X)$, we can consider then the morphism of exact sequences
\begin{center}
\begin{tikzcd}
	{K^1(\mathbb{T}/\mathbb{T}_-)} & {K^1(\mathbb{T})} & {K^1(\mathbb{T}_-)} \\
	{H^{\mathrm{odd}}(\mathbb{T}/\mathbb{T}_-;\mathbb{Q}))} & {H^{\mathrm{odd}}(\mathbb{T};\mathbb{Q})} & {H^{\mathrm{odd}}(\mathbb{T}_-,;\mathbb{Q}))}
	\arrow[from=1-1, to=1-2]
	\arrow["\iota^*" ,from=1-2, to=1-3]
	\arrow[from=2-1, to=2-2]
	\arrow["\iota^*" ,from=2-2, to=2-3]
	\arrow[from=1-1, to=2-1]
	\arrow[from=1-2, to=2-2]
	\arrow[from=1-3, to=2-3]
\end{tikzcd}
\end{center}
where the vertical maps are given by the Chern character. In particular we know from the Atiyah-Singer index theorem that
\begin{equation*}
\mathrm{ch}(\mathrm{ind}\{D_B-A\})=\cup_Y^3\in H^3(\mathbb{T})=\left(\Lambda^1H^1(Y)\right)^*
\end{equation*}
so that $i^*(\cup_Y^3)=0\in H^3(\mathbb{T}_-)$, hence $\cup_Y^3$ arises from an element of $H^3(\mathbb{T},\mathbb{T}_-;\mathbb{Q})$.
\end{remark}

We then choose an adapted Morse function $f$, see Definition \ref{adaptedmorse}. The Floer chain complex $\check{C}$ is naturally relatively $\mathbb{Z}$-graded in this situation. For our purposes it is convenient to fix the absolute grading of the first stable critical point over an index zero critical point in $\mathbb{T}_-$ to be $0$; this is well-defined because there is no spectral flow between points in $\mathbb{T}_-$ of $f$ given our definition.
\begin{remark}
Notice that this is not the canonical absolute $\mathbb{Q}$-grading introduced in \cite[Ch. 28]{KM}.
\end{remark}

Let us consider the filtration on critical points induced by the value of $\widetilde{\mathcal{L}}$. We obtain subcomplexes
\begin{equation*}
\check{C}^-\subset \check{C}\text{  and  }\bar{C}^-\subset \bar{C}.
\end{equation*}
Denoting by $\check{C}^+$ and $\bar{C}^+$ the respective quotient chain complexes, these all fit in the diagram
\begin{center}
\begin{tikzcd}
	0 & {\bar{C}^-} & {\bar{C}} & {\bar{C}^+} & 0 \\
	0 & {\check{C}^-} & {\check{C}} & {\check{C}^+} & 0
	\arrow[from=2-1, to=2-2]
	\arrow[from=2-2, to=2-3]
	\arrow[from=2-3, to=2-4]
	\arrow[from=2-4, to=2-5]
	\arrow[from=1-1, to=1-2]
	\arrow[from=1-2, to=1-3]
	\arrow[from=1-3, to=1-4]
	\arrow[from=1-4, to=1-5]
	\arrow[from=1-2, to=2-2]
	\arrow[from=1-3, to=2-3]
	\arrow[from=1-4, to=2-4]
\end{tikzcd}
\end{center}
Calling the vertical maps $i$, we have the following observation.
\begin{lemma}\label{surj}
The induced maps $i_*:H_*(\bar{C}^{\pm})\rightarrow H_*(\check{C}^{\pm})$ are surjective, and isomorphisms in degrees high enough.
\end{lemma}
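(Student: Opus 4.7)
The plan is to work with the short exact sequence of chain complexes underlying $i$, passing to the filtered and quotient subcomplexes, and to exploit the homotopical triviality of the Dirac family on $\mathbb{T}_-$ and on its complement provided by Lemma~\ref{ktop}.

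First I would note that the chain map $i=\mathrm{id}-\partial^u_s\colon \bar{C}\to \check{C}$ is surjective on the nose: the inclusion $x\mapsto (x,0)$ of $C^s$ into $C^s\oplus C^u$ is a graded vector space section of $i$. Since the filtration is induced by the value of the Chern--Simons--Dirac functional on the underlying critical points, this chain-level surjectivity restricts to $\bar{C}^-\twoheadrightarrow \check{C}^-$ and descends to $\bar{C}^+\twoheadrightarrow \check{C}^+$. The kernel of $i$ is isomorphic as a chain complex to the ``from'' complex $\hat{C}$ via $y\mapsto (\partial^u_s y, y)$, and this identification is compatible with the filtration. Thus we obtain short exact sequences
\begin{equation*}
0\to \hat{C}^{\pm}\to \bar{C}^{\pm}\xrightarrow{i}\check{C}^{\pm}\to 0
\end{equation*}
and the associated long exact sequences
\begin{equation*}
\cdots\to H_n(\hat{C}^{\pm})\to H_n(\bar{C}^{\pm})\xrightarrow{i_*}H_n(\check{C}^{\pm})\xrightarrow{\delta} H_{n-1}(\hat{C}^{\pm})\to\cdots.
\end{equation*}

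Next I would identify the three homology groups appearing above. By Lemma~\ref{ktop}, the families of Dirac operators $\{D_B-A\}$ restricted to $\mathbb{T}_-$ and to $\mathbb{T}_+:=\mathbb{T}_Y\setminus\mathrm{int}\,\mathbb{T}_-$ are both homotopically trivial. Invoking a filtered analog of the homotopy invariance of coupled Morse homology from \cite[Ch.~33]{KM} (compare the computation of $\HMb$ recalled above the lemma), one deforms the family to a trivial one without changing the chain homotopy type of each of the pieces. The outcome is
\begin{align*}
H_*(\bar{C}^{\pm})&\cong H_*(\mathbb{T}_{\pm};\mathbb{Q})\otimes \mathbb{Q}[U,U^{-1}],\\
H_*(\check{C}^{\pm})&\cong H_*(\mathbb{T}_{\pm};\mathbb{Q})\otimes \mathcal{T}_+,\\
H_*(\hat{C}^{\pm})&\cong H_*(\mathbb{T}_{\pm};\mathbb{Q})\otimes U\mathbb{Q}[U],
\end{align*}
under which $i_*$ becomes $\mathrm{id}\otimes \mathrm{pr}$, with $\mathrm{pr}\colon \mathbb{Q}[U,U^{-1}]\to \mathcal{T}_+$ the quotient by $U\mathbb{Q}[U]$. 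This projection is surjective in every degree and an isomorphism once the $U$-degree is non-negative, which translates into surjectivity in all degrees and into an isomorphism above a fixed threshold after tensoring with the finite-dimensional $H_*(\mathbb{T}_{\pm};\mathbb{Q})$.

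The main obstacle is the step invoking filtered homotopy invariance: the complexes $\check{C}^{\pm}$, $\bar{C}^{\pm}$, $\hat{C}^{\pm}$ are not defined intrinsically as Floer complexes of a subdomain of $Y$, but rather as pieces of the Floer complex of $Y$ carved out by the CSD filtration, with the ``mixed'' differentials $\partial^u_s$, $\bar{\partial}^s_u$, $\bar{\partial}^u_s$ surviving the restriction. One must therefore verify that a homotopy of the family within $\mathbb{T}_{\pm}$, together with a compatible homotopy of the Morse function $f$ preserving $A$-adaptedness, induces filtered chain homotopy equivalences onto the coupled Morse complexes of the trivialized family. This amounts to a parametrized continuation argument respecting the CSD filtration, along the lines of \cite[Ch.~33]{KM}; once it is in place, the $U$-module calculation above yields both conclusions of the lemma formally.
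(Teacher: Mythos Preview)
Your setup with the long exact sequence is fine, but the computation of $H_*(\check{C}^{\pm})$ (and similarly $H_*(\hat{C}^{\pm})$) via homotopy invariance of coupled Morse homology is not justified, and this is the heart of the matter. The differential $\check{\partial}=\bar{\partial}^s_s-\partial^u_s\bar{\partial}^s_u$ contains the term $\partial^u_s$, which counts \emph{irreducible} Seiberg--Witten trajectories on $\mathbb{R}\times Y$. These are not objects of coupled Morse homology at all: the theory of \cite[Ch.~33]{KM} deals only with the reducible locus, and its homotopy invariance statements concern deformations of the Dirac family, not of the full Seiberg--Witten flow. So the ``parametrized continuation argument respecting the CSD filtration'' you appeal to simply does not exist for $\check{C}^{\pm}$ or $\hat{C}^{\pm}$ in the form you need. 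In fact, the identification $H_*(\check{C}^{\pm})\cong H_*(\mathbb{T}_{\pm})\otimes\mathcal{T}_+$ is proved in the paper \emph{after} and \emph{using} this lemma, so your argument is effectively circular.

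The paper's proof is much more direct and bypasses this issue entirely. The key observation is that $\bar{\partial}^s_u=0$ on each of $\bar{C}^{\pm}$: since there is no spectral flow within $\mathbb{T}_-$ or within $\mathbb{T}\setminus\mathrm{int}\,\mathbb{T}_-$, the $k$th stable critical point over a Morse index $i$ point has grading $i+2k$ and the $j$th unstable one has grading $i-1-2j$; as reducible trajectories strictly decrease Morse index, a short parity/grading check forces $\bar{\partial}^s_u=0$. Once this is known, any $\check{\partial}$-cycle $x\in C^s$ satisfies $\bar{\partial}^s_s x=0$, so $(x,0)\in\bar{C}^{\pm}$ is a $\bar{\partial}$-cycle mapping to $x$; surjectivity is immediate. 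The isomorphism in high degrees follows because unstable critical points have bounded grading. Note that $\bar{\partial}^s_u=0$ also makes the irreducible contribution to $\check{\partial}$ vanish, which is exactly the missing ingredient that would make your approach work.
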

Notice that the analogous statement does not hold for the map $i_*$ on the full complexes; indeed, in that situation the cokernel is the reduced Floer homology, which we will see to be not-trivial in concrete examples.
\begin{proof}
The key point here is that the restrictions of the family of operators to the complement of $\mathsf{K}\subset\mathbb{T}$ have no kernel, hence there is no spectral flow along paths contained in either $\mathrm{int}\mathbb{T}_-$ or $\mathbb{T}\setminus\mathrm{int}\mathbb{T}_-$. More specifically, let us consider $\mathbb{T}_-$, the other case being the same.
\par
We first show that $\bar{\partial}^s_u=0$. The $k$th stable critical point over an index $i$ critical point (where the bottom corresponds to $k=0$) of $f$ has grading $i+2k$. On the other hand, the $j$th unstable critical point (where the top corresponds to $j=0$) has grading $i-1-2j$. Because trajectories decrease Morse indices by at least $1$, we conclude for grading reasons.
\par
Inspecting the definition of $\bar{\partial}$, we see that $\bar{\partial}^s_u=0$ implies that if $x\in\check{C}^-=C^s$ is a cycle, then $(x,0)\in\bar{C}^-$ is also a cycle, so that $i_*$ is surjective. Finally, $i_*$ is an isomorphism in degrees high enough because there is an upper bound on the degree of unstable critical points.
\end{proof}

We now discuss the groups $H_*(\bar{C}^{\pm})$ in more detail. In order to do so, we generalize the framework of coupled Morse homology \cite[Ch. 33]{KM} from the setup of closed manifold manifolds $Q$ to the case of non-empty boundary $\partial Q\neq 0$. In this situation, as in classical Morse theory with boundary, one can look at Morse functions $f$ for which $-\mathrm{grad}f$ points either outwards or inwards at the boundary; in the former case one obtains $H_*(Q,\partial Q)$ while in the latter one obtains $H_*(Q)$. When $Q$ is equipped with a family $L$ of Dirac-type operators, one can then construct the corresponding coupled Morse homology groups by $\bar{H}^{\pm}_*(Q;L)$. These are again invariant under homotopy of $L$.
\par
In our situation, we apply this discussion to $\mathbb{T}_-$ and $\mathbb{T}\setminus\mathrm{int}\mathbb{T}_-$ with a Morse function on $\mathbb{T}$ as above; thought of a Morse function on $\mathbb{T}\setminus\mathrm{int}\mathbb{T}_-$ (resp $\mathbb{T}^-$), the vector field $-\mathrm{grad}f$ points outwards (resp. inwards), and the respective coupled Morse chain complexes are identified with $\bar{C}^+$ (resp. $\bar{C}^-$). Hence we get the exact triangle
\begin{center}
\begin{tikzcd}
	&& {\HMb_*(Y,\spin)} \\
	{\bar{H}^-_*(\mathbb{T}^-,\{D_{B,A}\})} &&&& {\bar{H}^+_*(\mathbb{T}\setminus\mathrm{int}\mathbb{T}_-,\{D_{B,A}\})}
	\arrow[from=1-3, to=2-5]
	\arrow["{\bar{\partial}}",from=2-5, to=2-1]
	\arrow[from=2-1, to=1-3]
\end{tikzcd}
\end{center}
where the horizontal arrow has degree $-1$.
\par
Because the restriction of the family $\{D_B-A\}$ to $\mathbb{T}\setminus\mathsf{K}$ is homotopically trivial, we also have the further identifications
\begin{align}\label{collapse}
\begin{split}
{\bar{H}_*^+(\mathbb{T}\setminus\mathrm{int}\mathbb{T}_-,\{D_{B,A}\})}&\cong H_*(\mathbb{T},\mathbb{T}_-)\otimes \mathbb{Q}[U,U^{-1}]\\
{\bar{H}_*^-(\mathbb{T}^-,\{D_{B,A}\})}&\cong H_*(\mathbb{T}^-)\otimes \mathbb{Q}[U,U^{-1}].
\end{split}
\end{align}
where we identified the homology of $\mathbb{T}\setminus\mathrm{int}\mathbb{T}_-$ relative to its boundary with the homology of the pair $(\mathbb{T},\mathbb{T}_-)$ using excision.
\\
\par
Using this, we can obtain a description of the groups $H_*(\check{C}^{\pm})$ as follows.
\begin{lemma}We have the identifications
\begin{align*}
H_*(\check{C}^-)&\cong H_*(\mathbb{T}_-)\otimes\mathcal{T}_+\\
H_*(\check{C}^+)&\cong H_*(\mathbb{T},\mathbb{T}_-)\otimes\mathcal{T}_+\langle-2\rangle
\end{align*}
as graded $\mathbb{Q}[U]$-modules (up to an overall shift).
\end{lemma}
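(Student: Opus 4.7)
The plan is to deduce $H_*(\check{C}^{\pm})$ from the already computed $H_*(\bar{C}^{\pm})$ by exploiting the standard three-flavor structure of monopole Floer homology, restricted to the appropriate subcomplexes. Concretely, the decomposition $\bar{C}^{\pm}=C^s_{\pm}\oplus C^u_{\pm}$ together with the chain-level definition $i=\mathrm{id}-\partial^u_s$ fits into a short exact sequence of chain complexes
\begin{equation*}
0\to \hat{C}^{\pm}\xrightarrow{y\mapsto(\partial^u_s y,\,y)} \bar{C}^{\pm}\xrightarrow{i} \check{C}^{\pm}\to 0,
\end{equation*}
where $\hat{C}^{\pm}=C^u_{\pm}$ is equipped with the differential $\hat{\partial}=\bar{\partial}^u_u+\bar{\partial}^s_u\partial^u_s$. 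This is the restriction of the standard sequence relating $\hat{C}$, $\bar{C}$, and $\check{C}$ in the full monopole Floer setup, and one checks directly that the restrictions are compatible since energy is monotone along Seiberg--Witten trajectories.

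By Lemma \ref{surj}, the map $i_*:H_*(\bar{C}^{\pm})\to H_*(\check{C}^{\pm})$ is surjective, which forces the connecting homomorphism in the associated long exact sequence to vanish. We therefore obtain the short exact sequence
\begin{equation*}
0\to H_*(\hat{C}^{\pm})\to H_*(\bar{C}^{\pm})\xrightarrow{i_*} H_*(\check{C}^{\pm})\to 0,
\end{equation*}
reducing the computation to the identification of $H_*(\hat{C}^{\pm})$.

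For the latter, the plan is to develop the analog of coupled Morse homology for the ``from'' flavor on manifolds with boundary, paralleling the construction of $\bar{H}^{\pm}_*(Q,L)$ from \cite[Ch. 33]{KM}, and to verify the corresponding homotopy invariance. Applied to our situation, the homotopical triviality of $\{D_B-A\}$ on both $\mathbb{T}_-$ and $\mathbb{T}\setminus\mathrm{int}\mathbb{T}_-$ (Lemma \ref{ktop}) allows us to reduce to the constant family case, where the chain complex decouples as a tensor product of the Morse chain complex of $(\mathbb{T}_{\pm},f)$ with the ``from'' chain complex of a point carrying a fixed non-degenerate self-adjoint operator; the latter is the negative tower $\mathcal{T}_-=U\mathbb{Q}[U]$. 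The excision identification used in equation (\ref{collapse}) then yields
\begin{equation*}
H_*(\hat{C}^-)\cong H_*(\mathbb{T}_-)\otimes U\mathbb{Q}[U], \quad H_*(\hat{C}^+)\cong H_*(\mathbb{T},\mathbb{T}_-)\otimes U\mathbb{Q}[U]\langle -2\rangle,
\end{equation*}
where the grading shift in the $+$ case accounts for the spectral flow $+1$ between the chosen basepoint of $\mathbb{T}_-$ and points in $\mathbb{T}\setminus\mathbb{T}_-$.

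Substituting these into the short exact sequence and using $\mathbb{Q}[U,U^{-1}]/U\mathbb{Q}[U]\cong \mathcal{T}_+$ yields the claimed identifications. The main technical obstacle is the rigorous setup of the ``from'' version of coupled Morse homology on manifolds with boundary and its homotopy invariance, which is not explicitly carried out in \cite{KM}; however, the same continuation-map techniques as for $\bar{H}^{\pm}_*$ should apply, provided one tracks unstable generators and the orientation conventions at the boundary carefully. An alternative, more concrete approach is to argue directly at the chain level: since $i_*$ is $\mathbb{Q}[U]$-linear, surjective, an isomorphism in high degrees, and $H_*(\check{C}^{\pm})$ is bounded below in grading (as stable critical points over a Morse-index-$i$ critical point contribute generators of grading $\geq i$, with the appropriate overall shift), the kernel of $i_*$ is uniquely pinned down as the $U\mathbb{Q}[U]$-submodule of the free $\mathbb{Q}[U,U^{-1}]$-module $H_*(\bar{C}^{\pm})$.
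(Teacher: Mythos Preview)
Your approach via the three-flavor short exact sequence is genuinely different from the paper's, which instead uses the Morse-index filtration directly. The paper chooses an $A$-adapted Morse function with the extra property that on each of $\mathbb{T}_-$ and $\mathbb{T}\setminus\mathbb{T}_-$ the value of $f$ is monotone in the Morse index; this forces $\partial^u_s$ (and hence the map $i$) to respect the Morse-index filtration on $\check{C}^{\pm}$ and $\bar{C}^{\pm}$. Since the induced spectral sequence on $\bar{C}^{\pm}$ collapses at $E^2$ by (\ref{collapse}), and $i_*$ is surjective on $E^2$ by Lemma~\ref{surj}, the spectral sequence on $\check{C}^{\pm}$ collapses as well, giving the result. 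This avoids any appeal to a ``from'' version of coupled Morse homology.

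Your main route has the gap you already flag: the homotopy invariance of a ``from'' flavor $\hat{H}^{\pm}_*(Q,L)$ on manifolds with boundary is not established in \cite{KM} or in this paper, and it is precisely what carries the weight of your computation of $H_*(\hat{C}^{\pm})$. (Note, incidentally, that because $\bar{\partial}^s_u=0$ on the $\pm$ pieces, your $\hat{\partial}$ reduces to $\bar{\partial}^u_u$, so you are really asking for the homology of the unstable subcomplex of the coupled Morse complex; the paper's filtration argument would compute this just as easily as $H_*(\check{C}^{\pm})$, but then you might as well run it on $\check{C}^{\pm}$ directly.)

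Your alternative argument has a genuine gap. Knowing that $i_*$ is $\mathbb{Q}[U]$-linear, surjective, an isomorphism in high degrees, and that $H_*(\check{C}^{\pm})$ is bounded below does \emph{not} pin down the kernel as $V\otimes U\mathbb{Q}[U]$ inside $V\otimes\mathbb{Q}[U,U^{-1}]$. For instance, if $V=H_*(\mathbb{T}_-)=\mathbb{Q}_0\oplus\mathbb{Q}_1$, then $U\mathbb{Q}[U]_0\oplus \mathbb{Q}[U]_1$ is another $\mathbb{Q}[U]$-submodule satisfying all your constraints, and its quotient $\mathcal{T}_+\langle 0\rangle\oplus\mathcal{T}_+\langle 3\rangle$ is not isomorphic to $H_*(\mathbb{T}_-)\otimes\mathcal{T}_+$ as a graded module. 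The chain-level bound ``generators of grading $\geq i$ over an index-$i$ point'' constrains the support of $\check{C}^{\pm}$ but not where each homology tower begins; that is exactly what the paper's collapse-of-spectral-sequence argument supplies.
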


\begin{proof}
Because $\bar{\partial}^s_u=0$ on $\check{C}^{\pm}$ by the proof of Lemma \ref{surj}, we see that the differential preserves the filtration by Morse index. 
\par
Let us choose an adapted Morse function with the following (readily arranged via handleslides) additional property: when restricted to $\mathbb{T}_-$ (resp. $\mathbb{T}\setminus\mathbb{T}_-$), for all critical points $x,y$ we have $\mathrm{ind}(y)<\mathrm{ind}(x)$ if and only if $f(y)<f(x)$. Because of this, the map $\partial^u_s$ preserves the Morse index filtration when restricted to $\mathbb{T}_-$ and $\mathbb{T}\setminus\mathbb{T}_-$; hence the maps
\begin{equation*}
i:\bar{C}^{\pm}\rightarrow\check{C}^{\pm}
\end{equation*}
are maps of filtered chain complexes. By (\ref{collapse}), the corresponding spectral sequence on $\bar{C}^{\pm}$ collapses at the $E^2$ page. Our claimed computation is the $E^2$-page of the spectral sequence on $\check{C}^{\pm}$; we conclude because the spectral sequence collapses at this stage by surjectivity of $i_*$ on $E^2$, see Lemma \ref{surj}.
\par
Finally, the grading shift in $H_*(\check{C}^+)$ arising from the spectral flow when crossing $\mathsf{K}$.
\end{proof}
In particular, we have an exact triangle
\begin{center}
\begin{tikzcd}\label{triangle1}
	&& {\HMt_*(Y,\spin)} \\
	{ H_*(\mathbb{T}_-)\otimes\mathcal{T}_+} &&&& {H_*(\mathbb{T},\mathbb{T}_-)\otimes\mathcal{T}_+\langle-2\rangle}
	\arrow[from=1-3, to=2-5]
	\arrow["{\check{\partial}}",from=2-5, to=2-1]
	\arrow[from=2-1, to=1-3]
\end{tikzcd}
\end{center}
where again the horizontal arrow has degree $-1$. Furthermore, the natural maps $i_*$ from the bar version triangle are
all surjective. Because of this, in order to describe $\check{\partial}$ it will suffice to describe $\bar{\partial}$, which we now do.
\\
\par
Consider the long exact sequence of the pair $(\mathbb{T},\mathbb{T}_-)$ in usual homology with $\mathbb{Q}$-coefficients
\begin{center}
\begin{tikzcd}
	&& {H_*(\mathbb{T})} \\
	{H_*(\mathbb{T}_-)} &&&& {{H}_*(\mathbb{T},\mathbb{T}_-)}
	\arrow["{q_*}",from=1-3, to=2-5]
	\arrow["{\partial}",from=2-5, to=2-1]
	\arrow["{i_*}",from=2-1, to=1-3]
\end{tikzcd}
\end{center}
where $\partial$ has degree $-1$. Because we are using $\mathbb{Q}$-coefficients, this breaks into the short exact sequences
\begin{equation}\label{short}
	0 \rightarrow {i_*(H_k(\mathbb{T}_-))} \rightarrow {H_k(\mathbb{T})} \rightarrow {q_*(H_k(\mathbb{T}))} \rightarrow 0
\end{equation}
and the degree $-1$ isomorphism 
\begin{equation}\label{isoconnecting}
 \mathrm{coker}\left(q_*:H_k(\mathbb{T})\rightarrow{H}_k(\mathbb{T},\mathbb{T}_-)\right)\cong\mathrm{ker}\left(i_*:H_{k-1}(\mathbb{T}_-)\rightarrow H_{k-1}(\mathbb{T})\right)
\end{equation}
Denoting the pieces in (\ref{short}) by $I^\pm_*$ respectivelty, and (\ref{isoconnecting}) by $E_*$, we see can consider the map $\bar{\partial}$ under the identification (\ref{collapse}) as a map
\begin{equation*}
\bar{\partial}:(I^+_*\oplus E_*)\otimes \mathbb{Q}[U,U^{-1}]\rightarrow (I^-_*\oplus E_*\langle-1\rangle)\otimes\mathbb{Q}[U,U^{-1}].
\end{equation*}
As we are working in the localized (bar) version of the invariants, this map is in fact independent of the specific family of operators up to homotopy and choice of Morse function. In order to obtain a concrete description, we look at the determination of the chain complex for a family pulled-back from $SU(2)$ as discussed in \cite[Ch. 34]{KM}, and in particular Proposition $34.2.1$ and its proof.
\par
Referring to the construction and notation there, because in our setup the restriction of the family on $\mathbb{T}\setminus\mathrm{int}\mathbb{T}_-$ is homotopically trivial, we can begin by homotoping the whole family to a map $v:\mathbb{T}\rightarrow SU(2)$ so that the restriction to it is the constant operator taking values in the north pole of $SU(2)$. In particular, $v^{-1}(W)\subset \mathbb{T}_-$, and we can furthermore choose the Morse function to take a fixed regular value on $\mathsf{K}$.
\par
With this setup, the coupled Morse chain complex is then $C_*(\mathbb{T},f)\otimes \mathbb{Q}[T,T^{-1}]$ equipped with the differential
\begin{align*}
\bar{\partial}x=\partial x+ (\tilde{\xi}\cap x)T
\end{align*}
where $\tilde{\xi}$ is a \v Cech cochain representing $\cup^3_Y$ supported in $\mathbb{T}_-$. When considering the filtration induced by $\mathbb{T}_-\subset\mathbb{T}$, we see then that $\bar{\partial}x=\partial x$ on $C_*(\mathbb{T},\mathbb{T}_-)\otimes \mathbb{Q}[T,T^{-1}]$. In particular we conclude (omitting the powers of $T$ from the notation for simplicity) that:
\begin{itemize}
\item on $I_+$, $\bar{\partial}$ coincides with cap product with $[\tilde{\xi}]=\cup_Y^3$;
\item on $E$, $\bar{\partial}$ is given by the isomorphism (\ref{isoconnecting}) (which decreases the Morse filtration index by $1$) plus higher order terms (possibly involving $I_-$); in particular it is an isomorphism onto its image.
\end{itemize}
Putting everything together, we finally get the following.
\begin{thm}\label{simplestcomp}
Suppose $(Y,\spin)$ is of the simplest type. We have the decomposition of $\mathbb{Q}[U]$-modules
\begin{equation*}
\HMt_*(Y,\spin)=i_*\left(\HMb_*(Y,\spin)\right)\oplus \HMr_*(Y,\spin)
\end{equation*}
where
\begin{align*}
i_*\left(\HMb_*(Y,\spin)\right)&\cong \mathrm{Cone}\left(\cup_Y^3:I^+\langle-2\rangle\rightarrow I^-\right)\otimes \mathcal{T}^+\\
\HMr_*(Y,\spin)&\cong E_*\langle-2\rangle;
\end{align*}
this holds at the level of graded modules up to an overall grading shift.
\end{thm}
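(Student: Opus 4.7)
The plan is to extract $\HMt_*(Y,\spin)$ from the exact triangle
\[
H_*(\mathbb{T}_-)\otimes\mathcal{T}_+ \longrightarrow \HMt_*(Y,\spin) \longrightarrow H_*(\mathbb{T},\mathbb{T}_-)\otimes\mathcal{T}_+\langle -2\rangle \xrightarrow{\check{\partial}} H_*(\mathbb{T}_-)\otimes\mathcal{T}_+
\]
established just before the theorem (the last arrow having degree $-1$) by first explicitly determining $\check{\partial}$ and then reading off its kernel and cokernel. By Lemma \ref{surj} and naturality of the bar-to-hat comparison, $\check{\partial}$ is the restriction of the bar-version connecting map $\bar{\partial}$ to the submodule $H_*(\cdot)\otimes\mathcal{T}_+\subset H_*(\cdot)\otimes\mathbb{Q}[U,U^{-1}]$, so the first task is to pin $\bar{\partial}$ down precisely.

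To do so, I will use the splittings $H_*(\mathbb{T},\mathbb{T}_-) = I^+\oplus E$ and $H_*(\mathbb{T}_-) = I^-\oplus E\langle -1\rangle$ dictated by (\ref{short}) and (\ref{isoconnecting}), and absorb the ``higher order terms'' off the $E$-column via a change of basis (permitted because the $E$-block is already an isomorphism onto its image). This places $\bar{\partial}$ in block-diagonal form: the $I^+\to I^-$ block is the cap product $\iota_{\cup_Y^3}\otimes\mathrm{id}$ coming from the term $(\tilde\xi\cap x)T$ of the chain-level differential $\bar\partial x=\partial x+(\tilde\xi\cap x)T$, while the $E\to E\langle -1\rangle$ block is the connecting isomorphism of the long exact sequence of the pair $(\mathbb{T},\mathbb{T}_-)$, tensored with the operator $U:\mathcal{T}_+\to\mathcal{T}_+$. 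The $U$-factor in the $E$-block is forced by a degree count balancing the $\langle -2\rangle$ grading shift on $H_*(\check{C}^+)$ against the requirement that $\check{\partial}$ have homological degree $-1$.

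Once this block form is in hand, the two blocks contribute to $\HMt_*$ independently. The $I^+$-$I^-$ block, being $\iota_{\cup_Y^3}\otimes\mathrm{id}_{\mathcal{T}_+}$, assembles via the triangle into $\mathrm{Cone}(\cup_Y^3:I^+\langle -2\rangle\to I^-)\otimes\mathcal{T}_+$; naturality with the bar version, where the analogous bar block computes all of $\HMb_*(Y,\spin)$, identifies this piece with $i_*(\HMb_*(Y,\spin))$. The $E$-block $\partial\otimes U$, with $\partial$ an isomorphism and $U:\mathcal{T}_+\to\mathcal{T}_+$ surjective with kernel $\mathbb{Q}\{U^0\}$, has trivial cokernel and kernel $E\otimes\mathbb{Q}\{U^0\}\cong E$; tracking gradings through the $\langle -2\rangle$ shift places this kernel at degree $k-2$ for classes originating in $E_k$, yielding the shifted module $E_*\langle -2\rangle$. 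Since this kernel is disjoint from the bar image (which lives entirely in the $I$-block), it comprises exactly $\HMr_*(Y,\spin)$, and rationally the short exact sequence $0\to i_*(\HMb_*)\to \HMt_*\to \HMr_*\to 0$ splits as $\mathbb{Q}[U]$-modules.

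The main obstacle is the first step: justifying the block-diagonal form of $\bar{\partial}$ and, crucially, pinning down the $U$-factor in the $E$-block. This relies on the pullback-from-$SU(2)$ model of \cite[Ch.~34]{KM}, applicable in our setting because the family of Dirac operators is homotopically trivial on $\mathbb{T}\setminus\mathrm{int}\mathbb{T}_-$ by Lemma \ref{ktop}, combined with the freedom to arrange the \v{C}ech representative $\tilde\xi$ of $\cup_Y^3$ to be supported in $\mathbb{T}_-$. After that, the argument reduces to formal homological algebra and grading bookkeeping.
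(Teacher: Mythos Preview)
Your proof is correct and follows essentially the same route as the paper: compute the connecting map $\check{\partial}$ in block form via the $SU(2)$-pullback model, read off the $I$-block contribution as the bar image and the $E$-block kernel (of $\partial\otimes U$) as the reduced homology, then check that the resulting short exact sequence splits. The only minor difference is that the paper justifies the $\mathbb{Q}[U]$-module splitting by a filtration argument---the reduced classes sit in the top filtration level, so the $U$-action from the towers cannot carry them nontrivially into $i_*(\HMb_*)$---rather than by appealing directly to the block decomposition, but the content is the same.
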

\begin{proof}
All the pieces are in places so we will comment on the remaining details. The grading shifts arise from the spectral flow when crossing $\mathsf{K}$, and the reduced Floer homology arise as the kernel of $\check{\partial}$ from parts of the triangle of the form
\begin{equation}\label{dashed}
U:\mathcal{T}^+\langle-2\rangle\rightarrow  \mathcal{T}^+\langle-1\rangle,
\end{equation}
where the $-1$ shift is the one from (\ref{isoconnecting}). Finally, there are no extension problems to worry about because the reduced homology on the $E^{\infty}$ page lies in the top filtration level, and $U$-action from towers can only decrease it.
\end{proof}

\vspace{0.3cm}

\section{Concrete examples}\label{sigmag}

In this section we study two concrete examples of spin$^c$ manifolds of simplest type (for suitable metric $g$ and perturbation $A$), namely $S^1\times \Sigma_h$ with $h=2,3$ equipped with the unique torsion spin$^c$ structure $\spin_0$, and use Theorem \ref{simplestcomp} to provide a computation of their Floer homology. We will suppress the genus of the surface from our notation when unneccessary.
\par
It is well-known that the unperturbed equations on $(S^1\times \Sigma,\spin_0)$ equipped with a product metric do not admit irreducible solutions for all $h$, see for example \cite{LinTheta} for an exposition. Furthermore the torus $\mathbb{T}_{S^1\times \Sigma}$ of flat connections can be identified with $S^1\times \mathrm{Jac}(\Sigma)$. The corresponding Dirac operator has kernel exactly at points in $\{0\}\times \Theta$, where $\Theta$ is the classical theta divisor of the complex curve: after identifying $\mathrm{Jac}(\Sigma)$ with the torus of degree $h-1$ holomorphic line bundles $\Theta\subset\mathrm{Jac}(\Sigma)$ is the locus consisting of bundles admitting non-zero holomorphic sections. Notice that this depends on the metric on $\Sigma$ only through its underlying conformal structure.
\par
Unfortunately, the argument for the absence of irreducible solutions (which involves dimensional reduction) is quite subtle and does not apply to the perturbed equations. We will therefore take an alternative route involving spectral geometry. In particular, we have the following.

\begin{prop}
Consider the product metric on $S^1\times\Sigma$ for which the length of the $S^1$ factor is small enough. Then $\lambda_1^*=\lambda_1(\Sigma)$, where the latter is the first non-zero eigenvalue of the Laplacian on functions on $\Sigma$.
\end{prop}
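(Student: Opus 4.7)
The plan is to exploit the product structure by separation of variables. Write $Y=S^1_L\times\Sigma$ with coordinate $\theta\in\mathbb{R}/L\mathbb{Z}$ and decompose any $1$-form uniquely as $\omega=f\,d\theta+\beta$, where $f\in C^\infty(Y)$ and $\beta$ is horizontal (i.e.\ $\iota_{\partial_\theta}\beta=0$, so that for each fixed $\theta$ it is a $1$-form on $\Sigma$). Expand further in Fourier modes, $f=\sum_k f_k(x)e^{2\pi ik\theta/L}$ and $\beta=\sum_k\beta_k(x)e^{2\pi ik\theta/L}$. Since the metric is a Riemannian product, $\Delta_Y$ commutes with both the Fourier projection and with the split into $f\,d\theta$ and $\beta$ parts; on the $k$-th Fourier mode it acts as $\Delta_\Sigma+c_k^2$ with $c_k=2\pi k/L$. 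Thus the eigenvalue problem splits into independent problems indexed by $k\in\mathbb{Z}$.

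Next, I would identify the coexact $1$-forms in each mode. A direct calculation gives
\[
d_Y\omega \;=\; d_\Sigma\beta+d\theta\wedge(\partial_\theta\beta-d_\Sigma f), \qquad d_Y^*\omega \;=\; -\partial_\theta f+d_\Sigma^*\beta .
\]
All harmonic $1$-forms of $Y$ live in mode $0$, so in any mode $k\neq 0$ the coexact condition reduces to coclosedness, i.e.\ $ic_kf=d_\Sigma^*\beta$; this relation makes the coexact subspace isomorphic to $\Omega^1(\Sigma)$ with $\beta$ as free parameter, and an eigenform of $\Delta_\Sigma$ on $1$-forms with eigenvalue $\mu$ produces an eigenform of $\Delta_Y$ with eigenvalue $\mu+c_k^2\geq c_k^2$. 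In mode $0$ one checks that coexact means $d_\Sigma^*\beta=0$ with $\beta$ orthogonal to the harmonic $1$-forms of $\Sigma$ (so $\beta$ is coexact on $\Sigma$) together with $\int_\Sigma f=0$; on a surface the nonzero Laplace spectrum on coexact $1$-forms equals the nonzero Laplace spectrum on functions (via the Hodge star), so the smallest eigenvalue contributed by mode $0$ is exactly $\lambda_1(\Sigma)$.

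Putting the two pieces together, the spectrum of $\Delta_Y$ on coexact $1$-forms is
\[
\bigl\{\,\mu : 0<\mu\in\operatorname{spec}(\Delta_\Sigma|_{\Omega^0})\,\bigr\}\;\cup\;\bigl\{\,\mu+c_k^2 : k\neq 0,\ \mu\in\operatorname{spec}(\Delta_\Sigma|_{\Omega^1})\,\bigr\},
\]
so $\lambda_1^*=\min\bigl(\lambda_1(\Sigma),\,(2\pi/L)^2\bigr)$. Choosing $L$ small enough that $(2\pi/L)^2>\lambda_1(\Sigma)$, the minimum is realized in mode $0$ and equals $\lambda_1(\Sigma)$, as claimed.

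There is no substantial analytic obstacle here: the only subtleties are book-keeping ones, namely verifying the product-metric identities $\Delta_Y=\Delta_{S^1}+\Delta_\Sigma$ on each type and the formula $d^*(fd\theta)=-\partial_\theta f$, and checking that the $\star$-isomorphism between coexact and exact $1$-forms on $\Sigma$ identifies their Laplace spectra with the nonzero spectrum on functions. Both are routine and require no auxiliary estimates.
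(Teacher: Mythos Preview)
Your argument is correct and is, at bottom, the same separation-of-variables computation as the paper's, only organized in the opposite order: the paper works with the first-order operator $\ast d$, expands the horizontal part in $\Delta_\Sigma$-eigenforms, and then solves the resulting ODEs in the $S^1$-variable, whereas you Fourier-expand in $\theta$ first and invoke the product formula $\Delta_Y=\Delta_\Sigma+c_k^2$ directly. Your packaging is a bit cleaner because the product Laplacian formula lets you read off the spectrum immediately without writing down the coupled system, and your identification of the $k=0$ mode with coexact $1$-forms on $\Sigma$ (plus mean-zero functions times $d\theta$) together with the Hodge-star isomorphism on a surface is exactly what the paper's $c_n=\ast_\Sigma e_n$ computation encodes; the final conclusion $\lambda_1^*=\min\bigl(\lambda_1(\Sigma),(2\pi/L)^2\bigr)$ is the same.
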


\begin{proof}
The result follows from a direct computation which is well-known in classical electromagnetism in relation to the propagation of the electromagnetic field in a waveguide \cite{Wal}. We denote by $\ast_\Sigma$ and $\ast$ the Hodge star on $\Sigma$ and $S^1\times\Sigma$ respectively. For a $1$-form $a$ on $\Sigma$, we have that
\begin{align*}
\ast a&=\ast_\Sigma a\wedge dt\\
\ast(a\wedge dt)&=-\ast_\Sigma a.
\end{align*}
Consider a general $1$-form
\begin{equation*}
b=fdt+\alpha
\end{equation*}
where $f$ is a function on $Y$ and $\alpha$ is a $1$-parameter family of $1$-forms on $\Sigma$.
\begin{comment}
To impose that the form is coclosed, we compute
\begin{align*}
\ast b&=f\mathrm{d}vol_{\Sigma}+(\ast_\Sigma \alpha) \wedge dt\\
d\ast b&=f'dt\wedge \mathrm{d}vol_{\Sigma}+(d_\Sigma\ast_\Sigma \alpha) \wedge dt
\end{align*}
so that taking $\ast$ again we obtain
\begin{equation*}
0=d^*b=f'+\ast(d_\Sigma\ast_\Sigma \alpha) \wedge dt=f'-(\ast_{\Sigma}d_\Sigma\ast_\Sigma \alpha)=f'+d_\Sigma^*\alpha.
\end{equation*}
\end{comment}
Let us consider the operator $\ast d$ (which squares to $\Delta$ on coclosed $1$-forms)
\begin{align*}
\ast db&=\ast(d_{\Sigma}fdt+d_\Sigma\alpha-\alpha'dt)=\\
&=-\ast_\Sigma d_\Sigma f+\ast_\Sigma \alpha'+(\ast_{\Sigma}d_\Sigma \alpha)dt.
\end{align*}
Hence the eigenvalue equation $\ast db=\mu b$ takes the form
\begin{align*}
\ast_{\Sigma}d_\Sigma \alpha&=\mu f\\
-\ast_\Sigma d_\Sigma f+\ast_\Sigma \alpha'&=\mu\alpha.
\end{align*}
Notice that solutions to the eigenvalue equation are automatically coclosed, and correspond to eigenforms with eigenvalue $\mu^2$. Let us assume $\mu\neq 0$ (harmonic forms are spanned by $dt$ and the harmonic forms on $\Sigma$). Substituting the first equation in the second one we then get (as $d^*_\Sigma=-\ast_\Sigma d_\Sigma\ast_\Sigma$ on $2$-forms on a surface)
\begin{equation*}
d^*_\Sigma d_\Sigma {\alpha}+\mu\ast_\Sigma \alpha'=\mu^2\alpha.
\end{equation*}
We now consider a basis of $1$-eigenforms $e_n$, $h_k$ and $c_n$ (exact, harmonic and coexact respectively), where we can assume $c_n=\ast_\Sigma e_n$ have eigenvalue $\lambda_n\neq 0$ for $\Delta_\Sigma=d_\Sigma d^*_\Sigma+d^*_\Sigma d_\Sigma$, and $\ast h_i=h_{i+g}$ for $i=1,\dots, g$. If $S^1=\mathbb{R}/2\pi L \mathbb{Z}$ has coordinate $t$, expanding in Fourier series in the $t$ variable
\begin{equation*}
\alpha(t)=\sum E_n(t)e_n+\sum C_n(t)c_n+\sum_{i\leq g} H_i(t)h_i+ H_{i+g}(t)h_{i+g},
\end{equation*}
with $E_n, C_n$ and $H_k$ periodic of period $2\pi L$, we have then
\begin{align*}
d^*_\Sigma d_\Sigma\alpha(t)&=\sum \lambda_n\cdot C_n(t)c_n\\
\ast_\Sigma \alpha'&=-\sum C_n'(t)e_n+\sum E_n'(t)c_n+\sum_{i\leq g}-H_{i+g}'(t)h_{i}+H_i'(t)h_{i+g}.
\end{align*}
Hence, looking at the $c_n$ and $e_n$ component we get the system
\begin{align*}
\lambda_n C_n(t)+\mu E'_n(t)&=\mu^2C_n(t)\\
-\mu C_n'(t)&=\mu^2 E_n(t).
\end{align*}
and looking at the $h_i$ components we obtain the system
\begin{align*}
H_i'&=\mu H_{i+g}\\
H_{i+g}'&=-\mu H_{i},
\end{align*}
which we can address separately. In the first system, differentiating the second equation, and substituting in the first, we obtain
\begin{equation*}
C_n''(t)=(\lambda_n-\mu^2)C_n(t).
\end{equation*}
In order for this equation to have periodic solutions (as $C_n$ has to be) we necessarily need $\lambda_n\leq \mu^2$. Conversely, one readily sees that $c_1$ defines a coexact eigenform on $S^1\times \Sigma$ with eigenvalue $\lambda_1(\Sigma)$.
\par
The second system involves the equation
\begin{equation*}
H_k''(t)=-\mu^2 H_k(t).
\end{equation*}
On $S^1=\mathbb{R}/2\pi L \mathbb{Z}$, the Fourier eigenmodes are $\{e^{int/L}\}$ with respective eigenvalue $n^2/L^2$. Hence by choosing $L$ sufficiently small these corresponding coexact eigenforms will have eigenvalue strictly larger than $\lambda_1(\Sigma)$.
\end{proof}

Suppose now that $\Sigma$ is equipped with a hyperbolic metric (by uniformization, there is a unique one in any conformal class). Then the Ricci tensor of $S^1\times \Sigma$ for the product metric is
\begin{equation*}
\begin{bmatrix}
0&0&0\\
0&-1&0\\
0&0&-1
\end{bmatrix}
\end{equation*}
so that $\tilde{s}\equiv -2$. By Proposition \ref{noirred} we can therefore conclude the following.
\begin{cor}
Suppose $(\Sigma,g_{hyp})$ is a hyperbolic surface for which $\lambda_1>1$. Then for the product metric on $S^1\times \Sigma$, where the $S^1$ component has sufficiently small length, small spinorial perturbations of the Seiberg-Witten equations do not have irreducible solutions.
\end{cor}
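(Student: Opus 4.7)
The plan is a direct assembly: combine the preceding proposition (identifying $\lambda_1^*$) with a pointwise calculation of $\tilde s$, to verify the inequality of Definition \ref{speclarge}, and then invoke Proposition \ref{noirred} as a black box.

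First I would read off $\tilde s$ from the displayed Ricci tensor just above the corollary: the eigenvalues of $\mathrm{Ricci}$ at every point of $S^1\times \Sigma$ (with the product metric) are $(0,-1,-1)$, so the sum of the two least eigenvalues is the constant $\tilde s(p)\equiv -2$, and therefore
\begin{equation*}
-\tfrac{1}{2}\inf_{p\in S^1\times\Sigma}\tilde s(p)\;=\;1.
\end{equation*}

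Second, by the preceding proposition, once the length of the $S^1$ factor is taken sufficiently small, the spectral gap on coexact $1$-forms satisfies $\lambda_1^*=\lambda_1(\Sigma)$. By hypothesis $\lambda_1(\Sigma)>1$, so
\begin{equation*}
\lambda_1^*\;=\;\lambda_1(\Sigma)\;>\;1\;=\;-\tfrac{1}{2}\inf_{p}\tilde s(p),
\end{equation*}
which is exactly the spectral largeness condition of Definition \ref{speclarge}. Applying Proposition \ref{noirred} to $(S^1\times \Sigma,g_{\mathrm{prod}})$ then yields the absence of irreducible solutions for any spinorial perturbation $A$ of sufficiently small $C^0$ norm, for every torsion spin$^c$ structure; in particular this covers $\spin_0$.

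Since each step is either a one-line computation or a direct citation, there is no real obstacle; the only point demanding mild care is that the "sufficiently small $S^1$-length" from the preceding proposition and the "sufficiently small $\|A\|_{C^0}$" from Proposition \ref{noirred} are chosen independently (the first depends only on the geometry of $\Sigma$), so there is no circularity in fixing them in that order.
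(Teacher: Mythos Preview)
Your proof is correct and follows exactly the paper's approach: compute $\tilde s\equiv -2$ from the displayed Ricci tensor, invoke the preceding proposition to get $\lambda_1^*=\lambda_1(\Sigma)>1$, and apply Proposition~\ref{noirred}. Your remark on the independence of the two smallness conditions is a nice extra clarification but not needed for the argument.
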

We will focus on the two following examples of low-genus hyperbolic surfaces for which $\lambda_1>1$:
\begin{itemize}
\item the Bolza surface (which corresponds to the affine equation $y^2=x^5-x$) has genus $2$ and $\lambda_1\approx 3.84$ \cite{SU}.
\item the Klein quartic (which corresponds to the homogeneous equation $x^3y+y^3z+z^3x=0$) has genus $3$ and $\lambda_1\approx 2.68$ \cite{Coo}.
\end{itemize}

To proceed further, we need to study the locus in $\mathbb{T}$ where the perturbed Dirac equation has solutions. Let us briefly recall the situation for the unperturbed Dirac equation, see \cite{LinTheta} for more details. Fix a square root $L$ of the canonical line bundle $K$ on $\Sigma$, so that $L\otimes \bar{K}\equiv \bar{L}$; this is equivalent to a choice of spin structure on $\Sigma$. We then have for $C$ a connection obtained from the spin connection one by twisting by a flat $U(1)$-connection on $\Sigma$ the operator
\begin{equation*}
\bar{\partial}_C:\Gamma(L)\rightarrow\Gamma(\bar{L})
\end{equation*}
and its adjoint $\bar{\partial}^*_C$. The spinor bundle on $S^1\times\Sigma$ (equipped with the product metric) is then $S=L\oplus \bar{L}$, and the Dirac operator corresponding to $B=C+i\lambda dt$ is
\begin{equation*}
D_B=
\begin{bmatrix}
i\frac{d}{dt}-\lambda& \sqrt{2}\bar{\partial}^*_C\\
\sqrt{2}\bar{\partial}_C&-i\frac{d}{dt}+\lambda
\end{bmatrix}
\end{equation*}
From here one concludes that if $D_B$ has non-trivial kernel, after gauge transformation $\lambda=0$ and solutions $(\alpha,\beta)$ satisfy $\bar{\partial}_C\alpha =0$ and $\bar{\partial}_C^*\beta=0$ (cf. \cite{LinTheta} for an exposition). In particular $B$ defines a holomorphic line bundle of degree $h-1$ with non-trivial holomorphic sections, i.e. it belongs to the theta divisor $\Theta\subset \mathrm{Jac}(\Sigma)$ as previously mentioned. The latter is a very well-studied object in the geometry of algebraic curves. It is the image of the Abel-Jacobi map
\begin{equation*}
\mathcal{A}:\mathrm{Sym}^{h-1}(\Sigma_{h-1})\rightarrow\mathrm{Jac}(\Sigma_h).
\end{equation*}
The map is generically injective, and the inverse image of a holomorphic line bundle is the projectivization of its space of holomorphic sections. In low genus, we have the following results (see for example \cite{ACGH}):
\begin{itemize}
\item When $h=2$, the map is always an isomorphism onto its image.
\item When $h=3$ is generic (not hyperelliptic), the map is again an isomorphism onto its image.
\end{itemize}
Notice that the Klein quartic is not-hyperelliptic, because smooth quartics are canonically embedded (see for example \cite[Ch. VII]{Mir}).
Let us recall some well-known facts about the topology of symmetric products \cite{Mac}:
\begin{itemize}
\item When $h=2$, the map $\mathcal{A}:\Sigma\rightarrow \mathrm{Jac}(\Sigma)$ induces an injection in homology.
\item When $h=3$, we have
\begin{equation}
H_i\left(\mathrm{Sym}^2(\Sigma_3)\right)=
\begin{cases}\label{sym3}
\mathbb{Q}\text{ if }i=0,4\\
H_1(\Sigma)\text{ if }i=1,3\\
\Lambda^2H_1(\Sigma)\oplus\mathbb{Q}\text{ if }i=2.
\end{cases}
\end{equation}
Furthermore, the induced map $\mathcal{A}_*$ has kernel given by the extra $\mathbb{Q}$ summand in $H_2$.
\end{itemize}
When perturbing the equations, we have the following.
\begin{lemma}\label{perttheta}
Consider a genus $2$ surface or a non-hyperelliptic genus $3$ surface. For $\delta>0$ small enough the locus $\mathsf{K}\subset\mathbb{T}$ for which the perturbed Dirac operator $\{D_B-\delta\}$ has kernel is transversely cut out, smooth and diffeomorphic to an $S^2$-bundle over $\Theta$. The component $\mathbb{T}_-$ is diffeomorphic to a disk bundle over $\Theta$, hence in particular $\mathbb{T}_-$ has the same homology as $\mathrm{Sym}^{h-1}(\Sigma_h)$.
\end{lemma}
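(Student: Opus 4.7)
The strategy is perturbation theory at the unperturbed kernel locus $\{0\}\times\Theta\subset\mathbb{T}$, using the explicit structure of $D_B$ on $S^1\times\Sigma$ with the product metric.

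First I would Fourier decompose along $S^1=\mathbb{R}/2\pi L\mathbb{Z}$: writing $B=C+i\lambda\,dt$ and $\Psi=\sum_{n\in\mathbb{Z}} e^{int/L}(\alpha_n,\beta_n)$, the equation $(D_B-\delta)\Psi=0$ decouples mode by mode into
\begin{align*}
-(n/L+\lambda+\delta)\alpha_n+\sqrt{2}\,\bar{\partial}_C^*\beta_n&=0,\\
\sqrt{2}\,\bar{\partial}_C\alpha_n+(n/L+\lambda-\delta)\beta_n&=0.
\end{align*}
Eliminating $\beta_n$ gives $\bar{\partial}_C^*\bar{\partial}_C\alpha_n=\tfrac{1}{2}(\delta^2-(n/L+\lambda)^2)\alpha_n$, so the spectrum of $D_B$ in this block is $\pm\sqrt{(n/L+\lambda)^2+2\mu_j(C)}$ with $\{\mu_j(C)\}_{j\geq 0}$ the non-negative eigenvalues of $\bar{\partial}_C^*\bar{\partial}_C$. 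Under our hypothesis on $h$, the Abel-Jacobi map is an embedding onto $\Theta$ and $h^0=1$ throughout $\Theta$, so $\mu_1(C)$ is uniformly bounded below on a neighborhood of $\Theta$ while $\mu_0(C)$ is uniformly bounded below on its complement; combined with $(n/L+\lambda)^2\geq(1/L-|\lambda|)^2$ for $n\neq 0$, this shows that for $L$ and $\delta$ small the locus $\mathsf{K}$ lies entirely in a tubular neighborhood of $\{0\}\times\Theta$ and is cut out there by
\[
\lambda^2+2\mu_0(C)=\delta^2.
\]

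Next I would expand $\mu_0$ near $\Theta$ using first-order perturbation theory for the family $\{\bar{\partial}_C\}$. Because $h^0(L_C)=1$ throughout $\Theta$, both $\ker\bar{\partial}_C$ and $\mathrm{coker}\,\bar{\partial}_C$ are smooth complex line bundles over $\Theta$, and the normal bundle $N\Theta\subset\mathrm{Jac}(\Sigma)$ is naturally identified with $\mathrm{Hom}_{\mathbb{C}}(\ker\bar{\partial}_C,\mathrm{coker}\,\bar{\partial}_C)$; this yields
\[
\mu_0(C)=|\ell_{C_0}(C-C_0)|^2+O(|C-C_0|^3)
\]
near each $C_0\in\Theta$, where $\ell_{C_0}\colon T_{C_0}\mathrm{Jac}(\Sigma)\to\mathbb{C}$ is the complex-linear functional annihilating $T_{C_0}\Theta$ and inducing a $\mathbb{C}$-linear isomorphism on the normal fibre.

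Finally I would assemble the global picture: in the normal slice at $(0,C_0)$, parametrized by $(\lambda,\nu)\in\mathbb{R}\oplus N_{C_0}\Theta$, the locus $\mathsf{K}$ becomes the ellipsoid $\lambda^2+2|\ell_{C_0}(\nu)|^2=\delta^2$, which is a smooth $S^2$ with nowhere-vanishing gradient, hence transversely cut out. These fibrewise spheres glue globally to realize $\mathsf{K}$ as the unit sphere bundle of the rank-three real bundle $\underline{\mathbb{R}}\oplus N\Theta\to\Theta$ (the trivial summand coming from the $S^1$-factor), i.e. an $S^2$-bundle over $\Theta$. The component $\mathbb{T}_-=\{\lambda^2+2\mu_0(C)<\delta^2\}$ is the corresponding disk bundle and retracts onto $\Theta\cong\mathrm{Sym}^{h-1}(\Sigma_h)$, yielding the homology statement. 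The main difficulty is the global matching of this local quadratic normal form for $\mu_0$; this reduces to the smoothness of the line bundles $\ker\bar{\partial}_C$, $\mathrm{coker}\,\bar{\partial}_C$ over $\Theta$, which is ensured by the hypothesis that $\mathcal{A}$ is an embedding and $h^0$ is constant on $\Theta$.
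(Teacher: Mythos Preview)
Your proof is correct and follows essentially the same approach as the paper: both use Fourier decomposition along $S^1$ to reduce to the $n=0$ mode, invoke the fact that $h^0\equiv 1$ on $\Theta$ (the content of the Riemann singularity theorem under the given hypotheses) to obtain transversality in the Jacobian direction, and arrive at the same local quadratic model $\lambda^2+|a|^2=\delta^2$ in the three-dimensional normal slice. The paper's version is more terse---it packages the Jacobian transversality as transversality of $C\mapsto\bar\partial_C$ into $\mathrm{Fred}_0$ and then passes directly to a $2\times 2$ matrix model---while you spell out the eigenvalue $\mu_0(C)$ and its quadratic expansion explicitly, but the substance is the same.
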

\begin{remark}
The same is true in the case of genus $0$ and $1$ surfaces, in which case $\Theta$ are empty and a point respectively.
\end{remark}
\begin{proof}
The theta divisor is the zero set of the theta function, and the Riemann singularity theorem says that the multiplicity of the theta function at a point $L$ is given by $h^0(L)$ \cite[Ch. VI]{ACGH} by our discussion above, this is $1$ at all points under the assumption of the lemma. In our context, the Riemann singularity theorem and its proof mean that the map
\begin{align*}
\mathrm{Jac}(\Sigma)&\rightarrow\mathrm{Fred}_0\\
C&\mapsto \bar{\partial}_C,
\end{align*}
where $\mathrm{Fred}_0$ is the space of index zero Fredholm operators (in a suitable analytical setup), is transverse to the locus of operators with kernel. This is because $\bar{\partial}_C$, the normal bundle is identified with operators on $\mathrm{ker}(\bar{\partial}_C)$, i.e. a copy of $\mathbb{C}$.
\par
We can then reduce to study the linearization of the local model in the normal directions. In particular, after a quick analysis of the Fourier expansion as in \cite{LinTheta}, we are left to understand the locus is the space of matrices
\begin{equation*}
\begin{bmatrix}
-\lambda& \bar{a}\\
a& \lambda
\end{bmatrix}\text{ where }\lambda\in\mathbb{R},a\in\mathbb{C}
\end{equation*}
having eigenvalue $\delta$. This is exactly the locus $\lambda^2+|a|^2=\delta^2$ (where the multiplicity is always $1$), i.e. a copy of $S^2$.
\end{proof}

Putting everything together, we have proved the following.
\begin{cor}
Consider on $(S^1\times \Sigma_h,\spin_0)$ a product metric with a short length $S^1$ and either the Bolza surface ($h=2$) or the Klein quartic ($h=3$). Then for small $\delta\neq 0$, the perturbation is of the simplest type.
\end{cor}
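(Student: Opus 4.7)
The plan is to verify the three items defining the simplest type for $(S^1\times\Sigma_h,\spin_0)$: absence of irreducible solutions to the perturbed equations, smoothness of the locus $\mathsf{K}\subset\mathbb{T}$, and the bound of $1$ on the spectral flow between points of $\mathbb{T}\setminus\mathsf{K}$. The first two items are essentially in hand: the preceding corollary together with Proposition \ref{noirred} handles the non-existence of irreducibles (both the Bolza and Klein surfaces are hyperbolic with $\lambda_1>1$, so the product metric with sufficiently short $S^1$ factor is spectrally large), while Lemma \ref{perttheta} gives the smoothness together with the crucial structural description of $\mathsf{K}$ as an $S^2$-bundle over $\Theta$ and $\mathbb{T}_-$ as the corresponding $D^3$-bundle.

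The substantive task is the spectral flow bound. Since $\spin_0$ is torsion, Lemma \ref{ktop} says the spectral flow around any loop in $\mathbb{T}$ vanishes, so the spectral flow between two points of $\mathbb{T}\setminus\mathsf{K}$ depends only on which connected components they sit in. The approach is therefore to argue that $\mathbb{T}\setminus\mathsf{K}$ has exactly two components --- namely $\mathbb{T}_-$ and $\mathbb{T}\setminus\overline{\mathbb{T}_-}$, each connected --- and that crossing $\mathsf{K}$ contributes $\pm 1$ to the spectral flow. The connectedness of $\mathbb{T}_-$ follows from the connectedness of $\Theta$, which is the image of the Abel--Jacobi map from the connected space $\mathrm{Sym}^{h-1}(\Sigma_h)$. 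The $\pm 1$ crossing can be read off directly from the local model in the proof of Lemma \ref{perttheta}, where the smallest-in-modulus eigenvalue of $D_B$ passes simply through $\delta$ as one enters a fiber ball.

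The main obstacle is showing that $\mathbb{T}\setminus\overline{\mathbb{T}_-}$ is itself connected. The key observation is that $\mathbb{T}_-$ is a tubular neighborhood of $\{0\}\times\Theta\subset\mathbb{T}$, and this subset has real codimension $3$ (since $\dim_{\mathbb{R}}\Theta=2h-2$ while $\dim_{\mathbb{R}}\mathbb{T}=2h+1$); removing a smooth submanifold of codimension at least $3$ from a connected manifold leaves it connected, and the complement of $\mathbb{T}_-$ deformation retracts onto this complement. Combining all of this yields a spectral flow bounded by one and realized by any pair of points on opposite sides of $\mathsf{K}$, confirming that $(S^1\times\Sigma_h,\spin_0)$ is of the simplest type.
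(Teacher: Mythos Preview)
Your proof is correct and matches the approach the paper leaves implicit (the paper simply writes ``Putting everything together, we have proved the following'' without spelling out details). You correctly identify that the content is the spectral flow bound, and your argument via the codimension of $\{0\}\times\Theta$ in $\mathbb{T}$ is sound: indeed $\dim_{\mathbb{R}}\Theta=2h-2$ while $\dim_{\mathbb{R}}\mathbb{T}=2h+1$, so the codimension is $3$ and the complement of a closed tubular neighborhood remains connected.

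One minor remark: the connectedness of $\mathbb{T}\setminus\overline{\mathbb{T}_-}$ is actually more than you need. Since $\mathsf{K}$ is precisely the boundary of the single tubular neighborhood $\mathbb{T}_-$, every component of $\mathbb{T}\setminus\mathsf{K}$ other than $\mathbb{T}_-$ itself must meet $\mathsf{K}$ from the outside, where the local model forces spectral flow $1$ from the basepoint; this already bounds the spectral flow by $1$ regardless of how many such components there are. Your codimension argument is the cleanest way to see there are exactly two components, but the simplest-type conclusion would survive without it.
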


Using Theorem \ref{simplestcomp} we then obtain the following concrete computations; the analogue in Heegaard Floer homology was proved using surgery techniques in \cite{OS}, \cite{JM}.
\begin{cor} We have the isomorphism of absolutely graded $\mathbb{Q}[U]$-modules
\begin{align*}
\HMt_*(S^1\times \Sigma_2,\spin_0)=& \left(\mathbb{Q}_{-1}\oplus \mathbb{Q}^{\oplus 9}_{-2}\oplus \mathbb{Q}^{\oplus 9}_{-3}\oplus\mathbb{Q}_{-4}\right)\otimes \mathcal{T}^+\\
\HMt_*(S^1\times \Sigma_3,\spin_0)=& \left(\mathbb{Q}_{-1}\oplus\mathbb{Q}^{\oplus 6}_{-2}\oplus\mathbb{Q}^{\oplus 28}_{-3}\oplus \mathbb{Q}^{\oplus 28}_{-4}\oplus\mathbb{Q}^{\oplus 6}_{-5} \mathbb{Q}_{-6} \right)\otimes \mathcal{T}^+\\
&\oplus \HMr_*(S^1\times \Sigma_3,\spin_0)
\end{align*}
where $\HMr_*(S^1\times \Sigma_3,\spin_0)=\mathbb{Q}_{-4}$ and the indices denote the absolute grading.
\end{cor}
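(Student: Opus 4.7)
The plan is to apply Theorem \ref{simplestcomp} to $(S^1\times\Sigma_h, \spin_0)$ equipped with the product metric and perturbation $A = \delta\cdot I$ for small $\delta > 0$, which the preceding corollary verifies is of the simplest type. This reduces the Floer computation to a purely topological one on $\mathbb{T}_Y = S^1\times\mathrm{Jac}(\Sigma_h)$.

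First I would unpack the topology of $\mathbb{T}_-$. By Lemma \ref{perttheta}, $\mathbb{T}_-$ is a disk bundle over the theta divisor $\Theta$, so $H_*(\mathbb{T}_-)\cong H_*(\mathrm{Sym}^{h-1}(\Sigma_h))$. For $h=2$, the Abel--Jacobi map $\mathcal{A}:\Sigma_2\hookrightarrow\mathrm{Jac}(\Sigma_2)$ injects on homology, so $E=0$ and $I^-_*$ matches $H_*(\Sigma_2)$. For $h=3$ (Klein quartic, non-hyperelliptic), $\mathcal{A}:\mathrm{Sym}^2(\Sigma_3)\hookrightarrow\mathrm{Jac}(\Sigma_3)$ is an embedding whose induced map on $H_2$ has a one-dimensional kernel per \eqref{sym3}, so $E=\mathbb{Q}$ in degree $2$, and $I^-_*$ is $H_*(\mathrm{Sym}^2(\Sigma_3))$ modulo that summand. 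The group $I^+_*$ is then read off from the short exact sequence \eqref{short}.

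Next I would analyze the map $\iota_{\cup_Y^3}:I^+\to I^-$. Since $H^*(Y)=H^*(S^1)\otimes H^*(\Sigma_h)$, the only nonzero triple cup product is $\theta\cup\alpha\cup\beta=\int_{\Sigma_h}\alpha\cup\beta$ for $\theta\in H^1(S^1)$ and $\alpha,\beta\in H^1(\Sigma_h)$, so $\cup_Y^3=\theta\wedge\omega$ where $\omega$ is the symplectic form on $H^1(\Sigma_h)$; its Poincar\'e dual in $\mathbb{T}$ is $\{\mathrm{pt}\}\times[\Theta]$ by the Riemann--Poincar\'e formula. Factoring the cap product as $\omega\cap(\theta\cap-)$ reduces the ranks of $\iota_{\cup_Y^3}:I^+_k\to I^-_{k-3}$ to those of cap-with-$\omega$ on the K\"ahler Jacobian $T^{2h}$, where Hard Lefschetz forces surjectivity in the relevant range $k\in\{3,\dots,2h+1\}$; the support argument of Lemma \ref{ktop} then guarantees the image lands in $I^-$. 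Consequently $\mathrm{Cone}(\iota_{\cup_Y^3}:I^+\langle-2\rangle\to I^-)$ has trivial cokernel and contributes only $\ker(\iota_{\cup_Y^3})$ suitably shifted.

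A direct count in an explicit basis yields ranks $(1,9,9,1)$ for $h=2$ and $(1,6,28,28,6,1)$ for $h=3$ in consecutive degrees, and $\HMr_*\cong E\langle-2\rangle$ is trivial for $h=2$ and one-dimensional for $h=3$, recovering the stated formulas modulo an overall grading shift. The hardest step will be pinning down the \emph{absolute} $\mathbb{Q}$-grading, which Theorem \ref{simplestcomp} leaves undetermined; I would fix it by computing the grading of one designated generator---for instance, the bottom of the tower over the index-zero critical point of the adapted Morse function on $\mathbb{T}_-$---via the canonical grading formula of \cite{KM}, or alternatively by matching a single specified grading against the Heegaard Floer computations of \cite{OS} and \cite{JM}.
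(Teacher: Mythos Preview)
Your overall strategy matches the paper's: both apply Theorem \ref{simplestcomp}, identify $I^{\pm}$ and $E$ via Lemma \ref{perttheta} and the known homology of $\mathrm{Sym}^{h-1}(\Sigma_h)$, and then compute the cone of $\iota_{\cup_Y^3}$. Your use of Hard Lefschetz on the Jacobian to see the surjectivity of $\iota_{\cup_Y^3}$ is a pleasant repackaging of what the paper does by writing out $\cup_Y^3=z\cup x_1\cup y_1+\cdots$ in an explicit symplectic basis and reading off the diagram directly; the two arguments are equivalent in content.

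The substantive divergence is in the absolute grading step, and here your proposal has a real weakness. Matching a generator against the Heegaard Floer computations of \cite{OS} and \cite{JM} is circular: the stated purpose of this corollary is precisely to \emph{recover} those computations by geometric means, so one cannot import the answer. Computing a single canonical absolute grading from the formulas of \cite[Ch.~28]{KM} would work in principle but is laborious for $b_1>0$. The paper instead exploits the fact that $S^1\times\Sigma_h$ admits an orientation-reversing self-diffeomorphism, so the duality isomorphism $\HMt_*(-Y)\cong\HMf^{*}(Y)$ (which sends degree $j$ to degree $-1-b_1(Y)-j$) forces the tower part to be symmetric about degree $-(b_1+1)/2=-(h+1)$. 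Since you already know the relative gradings of the towers from the cone computation, this symmetry pins down the absolute grading with no further work, and simultaneously fixes the grading of $\HMr$. You should replace your final paragraph with this argument.
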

\begin{remark}
The conventions for absolute gradings in monopole Floer homology differ from those in Heegaard Floer by shifting down by $b_1(Y)/2$.
\end{remark}

\begin{proof}
We begin with the case $g=2$, and set $\delta>0$. Consider a basis of $H_1(\mathbb{T})=H^1(S^1)\oplus H^1(\Sigma_2)$ of the form $z,x_1,y_1,x_2,y_2$ where the latter is symplectic for the cup product on $H^1(\Sigma_2)$. The triple cup product is then given by
\begin{equation*}
\cup_Y^3 =z\cup x_1\cup y_1+z\cup x_2\cup y_2.
\end{equation*}
Theorem \ref{simplestcomp} then identifies $\HMt_*(S^1\times \Sigma_2,\spin_0)$ with the homology of the following complex:
\begin{center}
\begin{tikzcd}[column sep=2em, row sep=0.5em]
	{\mathcal{T}_+} \\
	& {\mathcal{T}_+^{\oplus 5}} && {\underline{\mathcal{T}_+}} \\
	&& {\mathcal{T}_+^{\oplus 10}} && {\underline{\mathcal{T}_+^{\oplus 4}}} \\
	&&& {\mathcal{T}_+^{\oplus 9}} && {\underline{\mathcal{T}_+}} \\
	&&&& {\mathcal{T}_+}	
	\arrow[two heads, from=1-1, to=2-4]
	\arrow[two heads, from=2-2, to=3-5]
	\arrow[two heads, from=3-3, to=4-6]
\end{tikzcd}
\end{center}
The underlined towers correspond to $H_*(\mathbb{T}_-)$ because by Lemma \ref{perttheta} the subset $\mathbb{T}_-$ is a disk bundle over $\Sigma_2$, the latter being the theta divisor; the shift comes from the spectral flow. Here the the $i$th column correspond to Morse index filtration $5-i$ on each of the pieces of $\mathbb{T}\setminus \mathsf{K}$. The fact that the arrows are surjections readily follows from the description of the triple cup product and the computation in $\HMb_*$.
\par
In the genus three case the diagram looks instead as the following:
\begin{center}
\begin{tikzcd}[column sep=2em, row sep=0.5em]
	{\mathcal{T}_+} \\
	& {\mathcal{T}_+^{\oplus7}} && {\underline{\mathcal{T}_+}} \\
	&& {\mathcal{T}_+^{\oplus 21}} && {\underline{\mathcal{T}_+}^6} \\
	&&& {\mathcal{T}_+^{\oplus 34}} && {\underline{\mathcal{T}_+}^{15}\oplus\underline{\widetilde{\mathcal{T}_+}}} \\
	&&&& {\mathcal{T}_+^{\oplus29}\oplus\widetilde{\mathcal{T}_+}} && {\underline{\mathcal{T}_+}^6} \\
	&&&&& {\mathcal{T}_+^{\oplus 6}} && {\underline{\mathcal{T}_+}} \\
	&&&&&& {\mathcal{T}_+^{}}
	\arrow[two heads, from=1-1, to=2-4]
	\arrow[two heads, from=2-2, to=3-5]
	\arrow[two heads, from=3-3, to=4-6]
	\arrow[two heads, from=4-4, to=5-7]
	\arrow[two heads, from=5-5, to=6-8]
	\arrow[dashed, from=5-5, to=4-6]
\end{tikzcd}
\end{center}
where the solid arrows involve only the towers without a tilde. The computation is essentially the same as the genus $2$ case except for the additional towers $\underline{\widetilde{\mathcal{T}_+}}$ and ${\widetilde{\mathcal{T}_+}}$, which correspond to the one dimensional kernel of $H_*(\mathbb{T}_-)\rightarrow H_*(\mathbb{T})$ arising from the extra $\mathbb{Q}$ summand in $\mathrm{Sym}^2(\Sigma_3)$, see (\ref{sym3}). The dashed map is of the form (\ref{dashed}) and its kernel gives rise to the reduced homology $\mathbb{Q}$.
\par
Finally, absolute gradings can be determined by noticing that the manifolds admit an orientation reversing diffeomorphism and the duality isomorphism
\begin{equation*}
\HMt_*(-Y)\rightarrow \HMf^*(Y)
\end{equation*}
maps elements of degree $j$ to elements of degree $-1-b_1(Y)-j$, see \cite[Ch. 28]{KM}.
\end{proof}

\begin{remark}
It is natural to ask whether our approach applies to higher genus surfaces; there are two aspects of the story related to spectral and algebraic geometry respectively.
\par
On the spectral side, there are some known examples of surfaces of genus with $\lambda_1^*>1$. For example, the Bring curve (the common zero locus of the equations $v^k+w^k+x^k+y^k+z^k$ for $k=1,2,3$) has genus $4$ and $\lambda_1\approx 1.92$ \cite{Coo} while the Fricke-Macbeath surface, the unique Hurwitz surface of genus $7$, satisfies $\lambda_1\in[1.23,1.26]$ \cite{Lee}. Notice though that for genus high enough there are no hyperbolic surfaces with $\lambda_1^*>1$ (see for example \cite{FP}). Indeed, from the point of view of spectral theory of the Laplacian of functions on a hyperbolic surface it is natural to divide eigenvalues in small or large, the threshold being $1/4$ (significance of the latter is that it is the spectral gap in $L^2$ of the hyperbolic plane $\mathbb{H}^2$).
\par
The main complication arises on the algebro-geometric side, as the theta divisor is \textit{never} smooth when $g\geq 4$ \cite{ACGH}. Smoothness was a key input in proving that our our examples are of the simplest type in Lemma \ref{perttheta}. For example, for a generic genus $4$ surface, there are exactly two singular points in $\Theta$ consisting of line bundles with $h^0=2$; furthermore the singular point $\Theta\subset\mathrm{Jac}(\Sigma)$ is modeled after the singular quadric
\begin{equation*}
\{xy=tz\}\subset\mathbb{C}^4
\end{equation*} 
and the Abel-Jacobi map $\mathcal{A}$ is given by the small resolution.

\end{remark}

\vspace{0.3cm}
\section{The three-torus}\label{T3}

We will prove our main results, Theorem \ref{main} and \ref{mainspin}. We begin by briefly recalling how to determine of the Floer homology group
\begin{equation}
\HMt_*(T^3,\spin_0)=\mathcal{T}_+^{\oplus3}\oplus\mathcal{T}_+^{\oplus3}\langle-1\rangle
\end{equation}
by looking at a flat metric (cf. \cite[Ch. $38$]{KM}). There is a unique flat spin$^c$ connection $B_0$ for which $B_0$ has kernel, corresponding to the two dimensional space of parallel spinors. By adding a $\delta$-perturbation to the equations, this gives rise to a small $S^2\subset\mathbb{T}_{T^3}$ of operators having kernel as in Lemma \ref{perttheta}. Choosing a standard Morse function on $\mathbb{T}_{T^3}$ so that minimum lies inside $S^2$, the Floer chain complex is then directly determined to be
\begin{center}
\begin{tikzcd}[column sep=2em, row sep=0.5em]
	{\mathcal{T}_+} \\
	& {\mathcal{T}_+^{\oplus3}} && {\underline{\mathcal{T}_+}} \\
	&& {\mathcal{T}_+^{\oplus3}}
	\arrow[from=1-1, to=2-4]
\end{tikzcd}
\end{center}
where the underlined summand is shifted up because of the spectral flow.
\\
\par
There are a few special features when discussing a spectrally large manifold $Y$ with $b_1=3$. First of all, for a generic spinorial perturbation $A$, $\mathsf{K}$ is smooth. Of course, it will in general not be of the simplest type. For a component $U\subset \mathbb{T}_Y\setminus \mathsf{K}$, so that $U$ is a component of $\mathrm{U}\subset \mathrm{int}\mathbb{T}_j\setminus\mathbb{T}_{j-1}$ for some $j$, we call its upper boundary the intersection of $\bar{U}\cap\partial\mathbb{T}_j$ and its lower boundary the intersection of $\bar{U}\cap\partial\mathbb{T}_{j-1}$. We can then find an $A$-transverse Morse function $f:\mathbb{T}_Y\rightarrow \mathbb{R}$ satisfying the following properties:
\begin{enumerate}[(a)]
\item $f$ has maxima only in the components of $\mathbb{T}_Y\setminus\mathsf{K}$ with no upper boundary, and has exactly one maximum in each of them; similarly, $f$ has minima only in the components of $\mathbb{T}_Y\setminus\mathsf{K}$ with no lower boundary, and has exactly one maximum in each of them. 
\item $f$ achieves the same value $M$ at all maxima, and the same value $m$ at all minima.
\end{enumerate}
Condition (a) can be achieved by suitable canceling $0$-$1$ and $2$-$3$ handle pairs. After that, one can modify the values of the maxima and the minima (without changing them outside a neighborhood) to achieve (b). Condition (b) is useful because of the following special feature of $b_1=3$.
\begin{lemma}\label{morseind}
Suppose an $A$-adapted Morse function $f$ satisfies (b). Then the Morse index of the underlying critical point induces a filtration on $\check{C}_*$. Furthermore, on Morse index $1$ critical points $\check{\partial}=\bar{\partial}^s_s$, and on index $0$ critical points $\check{\partial}=0$.
\end{lemma}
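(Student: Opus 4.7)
The plan is to base the proof on action estimates enabled by condition (b). With the reference connection $B_0$ chosen flat, at any reducible critical point $(B,0)$ with $B\in\mathbb{T}_Y$ one has $\widetilde{\mathcal{L}}(B,0) = \varepsilon f(B)$, and this value is inherited by every stable or unstable critical point lying above $(B,0)$ in the blow-up. Since every non-constant Seiberg--Witten trajectory strictly decreases $\widetilde{\mathcal{L}}$, any nonzero count contributing to $\bar{\partial}^s_s$, $\bar{\partial}^s_u$, or $\partial^u_s$ from a critical point over $x\in\mathbb{T}_Y$ to one over $z\in\mathbb{T}_Y$ forces the strict inequality $f(z)<f(x)$.

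The index-$0$ statement then follows at once: a stable $\alpha$ over a minimum of $f$ sits at the minimum action level $\varepsilon m$, and by (b) no critical point of $f$ takes a smaller value, so both $\bar{\partial}^s_s\alpha$ and $\bar{\partial}^s_u\alpha$ vanish and hence $\check{\partial}\alpha=0$. For a stable $\alpha$ over an index-$1$ critical point $x$, the reducible contributions to $\bar{\partial}^s_u\alpha$ correspond in the dimension-zero count to Morse flowlines of $-\mathrm{grad}\,f$ from $x$ to index-$0$ critical points of $f$, which by (b) all lie at level $\varepsilon m$; the subsequent $\partial^u_s$ would then require a target $z$ with $f(z)<m$, which is impossible. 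Thus $\partial^u_s\bar{\partial}^s_u\alpha=0$ and $\check{\partial}\alpha=\bar{\partial}^s_s\alpha$.

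For the Morse-index filtration, I would verify, for each starting Morse index $i(x)\in\{0,1,2,3\}$, that $\check{\partial}\alpha$ is a combination of stables over critical points of Morse index at most $i(x)$. The reducible term $\bar{\partial}^s_s$ drops the underlying Morse index by one (up to tower-height shifts induced by spectral flow across $\mathsf{K}$), so it is harmless. The potentially problematic term is the composition $\partial^u_s\bar{\partial}^s_u$, which factors through an unstable over some $y$ with Morse index strictly less than $i(x)$ and $f(y)<f(x)$, and lands at stables over some $z$ with $f(z)<f(y)$. Here condition (b) enters crucially: every index-$3$ critical point of $f$ is a maximum of value $M\geq f(x)$, so if $z$ had Morse index $3$ then $M=f(z)<f(x)\leq M$, a contradiction. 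Hence $i(z)\leq 2$, and combined with the already-settled cases $i(x)\in\{0,1\}$ this completes the filtration check for all $i(x)$.

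The main subtlety I anticipate is ensuring that the reducible contributions to $\bar{\partial}^s_u$ really do correspond, in the dimension-zero count, to honest Morse flowlines between underlying Morse critical points of $f$, once the spectral-flow grading shifts between the strata $\mathbb{T}_j\setminus\mathbb{T}_{j-1}$ are correctly tracked in the blow-up. Beyond this blow-up-level bookkeeping, the remainder of the argument is purely action-theoretic and extracts no further input from the nonlinear theory, so the bulk of the proof reduces to the simple inequality $f(z)<f(x)$ together with the equidistribution of critical values enforced by (b).
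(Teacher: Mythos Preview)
Your approach is correct and is essentially the paper's own argument spelled out in more detail. Both proofs rest on the same two ingredients: every contributing trajectory strictly lowers the value of $f$ at the underlying critical point, and condition (b) pins the values at the extrema to $m$ and $M$; the rest is a short case analysis over $i(x)\in\{0,1,2,3\}$. The paper's version is much terser: it singles out as its ``main observation'' that $\partial^u_s$ preserves the parity of the underlying Morse index and also strictly lowers $f$, and then leaves the case check to the reader.

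The subtlety you flag---that the zero-dimensional contributions to $\bar\partial^s_u$ from a stable over $x$ really land over a \emph{strictly lower-index} critical point of $f$---is exactly where the paper's grading/parity observation does its work. Once one knows the parity behaviour of the maps in the blow-up, the contributions you are worried about are excluded on relative-grading grounds, and your action estimate (for $i(x)=1$: $i(y)=0$, hence $f(y)=m$, hence no room for $\partial^u_s$) then runs exactly as written. So your proof and the paper's are the same argument: you have written out the action-theoretic case analysis carefully, while the paper isolates explicitly the one structural input you correctly identified as needing justification.
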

\begin{proof}
The main observation is that the map $\partial^u_s$ appearing in $\check{\partial}$ preserves the parity of the underlying Morse index. Furthermore, it also strictly lowers the value of the Morse function and one concludes by a quick analysis of the effect of $\partial^u_s\bar{\partial}^s_u$ on points with Morse index $0\leq i\leq 3$.
\end{proof}

We are now ready to prove our main results.

\begin{proof}[Proof of Theorem \ref{main}]
Fix an $A$-transverse Morse function satisfying (a) and (b). By Lemma \ref{morseind} we can consider the spectral sequence induced by the Morse filtration. Let us denote
\begin{equation*}
E^2=\mathcal{F}_3\oplus \mathcal{F}_2\oplus \mathcal{F}_1\oplus \mathcal{F}_0
\end{equation*}
where the subscripts denote the filtration index. Notice that $\mathcal{F}_i$ lies in degree $i$ modulo $2$, so that $d_2=0$ and $E^2=E^3$. In degrees high enough, $(E^3,d_3)$ is forced to look like the Floer chain complex of the flat torus described above. In particular we have
\begin{equation*}
\mathcal{F}_i=
\begin{cases}
\mathcal{T}_+\langle d_i\rangle\oplus G_i \text{ if }i=0,3\\
\mathcal{T}_+^{\oplus3}\langle d_i\rangle\oplus G_i \text{ if }i=1,2
\end{cases}
\end{equation*}
for some integer $d_i$ and torsion $\mathbb{Q}[U]$-modules $G_i$.
\par
Consider collection of all minima $x_i$ for $i=1,\dots,n$ of ${f}$. Because $\mathbb{T}$ is connected, we can find (up to reordering) critical points $y_i$ for $1\leq i\leq n-1$ of index $1$ such that $\partial y_i=x_i-x_{i+1}$. Assumption (a) then assures that there is non-zero spectral flow on each of the trajectories between $y_i$ and $x_i$ or $x_{i+1}$. Hence, using Lemma \ref{morseind}, these critical points determine a subcomplex in $\check{C}_*$ of the form
\begin{equation}\label{01}
\begin{tikzcd}[column sep=tiny]
&\mathcal{T}_+ \arrow[dl]\arrow[dr]&&\cdots&&&\mathcal{T}_+\arrow[dr] \arrow[dl]&\\
     \mathcal{T}_+&&\mathcal{T}_+&\cdots&&\mathcal{T}_+&&\mathcal{T}_+
\end{tikzcd}
\end{equation}
where the top (resp. bottom) row consists of the $y_i$ (resp $x_i$), and each arrow is multiplication by $U^{k}$ for some $k\geq 1$ because of the spectral flow (we are not keeping track of gradings in the figure). From here we see that in fact
\begin{equation}\label{F0}
\mathcal{F}_0=\mathcal{T}_+\langle d_0\rangle
\end{equation}
i.e. $G_0=0$, and $G_1\neq0$ as soon as there is more than one minimum (cf. the analogous computations for the case $b_1=1$ in \cite{LinTheta}). Considering now the higher differential
\begin{equation}\label{d3}
d_3:\mathcal{F}_3\rightarrow \mathcal{F}_0,
\end{equation}
which is clearly surjective because of the $\mathbb{Q}[U]$-module structure, we see that it is be injective. This follows because anything in the kernel would lead to non-trivial reduced homology in $\HMt_*(T^3,\spin_0)$, again because the $U$-action from towers lowers the filtration level. Hence we conclude that
\begin{equation}\label{F3}
\mathcal{F}_3=\mathcal{T}_+\langle d_0+1\rangle.
\end{equation}
Turning upside down the argument for critical points of Morse index $0$ and $1$ above (\ref{01}), this implies that $f$ has exactly one maximum. Furthermore, for grading reasons (\ref{d3}) is the only non-zero component of $d_3$. In particular, $G_1$ survives in the $E^{\infty}$ page. Because of filtration reasons, it contributes to reduced homology, hence it vanishes, and $f$ has exactly one minimum as well.
\par
Furthermore, comparing the gradings in (\ref{F0}) and (\ref{F3}), we see that any path between the maximum and the minimum has spectral flow $-1$. This implies in particular that $\mathsf{K}$ is of the simplest type, so that we can apply Theorem \ref{simplestcomp}. Because $T^3$ has trivial reduced Floer homology, we see that $i_*:H_*(\mathbb{T}_-)\rightarrow H_*(\mathbb{T})$ is injective. By inspecting the compatibility with the $\Lambda^* H_1(T^3)$-action, the $E^1$ page of the simplest type spectral sequence is forced to be either that of the flat torus described above or
\begin{center}
\begin{tikzcd}[column sep=2em, row sep=0.5em]
	& {{\mathcal{T}}_+^{\oplus3}} &&  \\
	{\underline{\mathcal{T}}_+} && {\mathcal{T}_+^{\oplus3}}
	\\&&& {{\mathcal{T}_+}}
	
	\arrow[from=2-1, to=3-4]
\end{tikzcd}
\end{center}
which is simply the chain complex corresponding to $(-\delta,-\varepsilon)$. Let us focus on the former as the latter case is identical. We see that $H_*(\mathbb{T}_-)=H_*(\mathrm{pt})$. This implies that $\mathsf{K}$ is a union of spheres, because for a general three-manifold $M$ the inclusion of the boundary $\partial M\subset M$ satisfies the `half dies, half lives' property, i.e. the map induced by the inclusion
\begin{equation*}
H_1(\partial M;\mathbb{Q})\rightarrow H_1(M;\mathbb{Q})
\end{equation*}
has kernel of dimension $b_1(\partial M)/2$. Finally, there is only one boundary component because otherwise $H_2(\mathbb{T}_-)=H^1(\mathbb{T_-},\mathsf{K})$ would be non trivial; hence $\mathsf{K}$ is a two-sphere.
\end{proof}

\begin{proof}[Proof of Theorem \ref{mainspin}]
For a generic small function $\tau$, none of the perturbed Dirac operators $D_B-\tau$ where $B\in\mathbb{T}$ corresponds to one of the eight spin structures will have kernel. Now, in dimension three the spin Dirac operators are quaternionic linear, and the perturbations above are quaternionic as well. Because of this, there is a well defined value of the spectral flow between them modulo two. In \cite{LinRok} it is shown that this value is precisely the difference of the corresponding Rokhlin invariants.
\par
The additional perturbation $A_0$ is not quaternionic linear; but provided its norm is small enough so that none of $D_B-\tau-tA_0$ for $B$ spin and $t\in[0,1]$ has kernel, it will not change the value of the spectral flow. In particular, two points $B,B'$ corresponding to spin structures will be on the same component of $\mathbb{T}\setminus \mathsf{K}$ exactly when they have the same Rokhlin invariant. Finally, the component which is a ball contains $\spin_0$ by examining the proof above.
\end{proof}

\vspace{0.3cm}

\bibliographystyle{alpha}
\bibliography{biblio}

\end{document}